\documentclass[12pt]{amsart}
\usepackage[all]{xy}
\usepackage{amsfonts}
\usepackage{amssymb}
\usepackage{pb-diagram}
\usepackage{enumerate}

\usepackage{color}

\textheight22.5cm
\textwidth16cm
\addtolength{\topmargin}{-20pt}
\evensidemargin-0.5cm
\oddsidemargin-0.5cm

\newtheorem{teo}{Theorem}[section]
\newtheorem{lem}[teo]{Lemma}
\newtheorem{prop}[teo]{Proposition}

\newtheorem{cor}[teo]{Corollary}

\theoremstyle{definition}
\newtheorem{dfn}[teo]{Definition}

\newtheorem{ex}[teo]{Example}
\def\<{\langle}
\def\>{\rangle}
\def\ss{\subset}

\def\a{\alpha}
\def\b{\beta}
\def\g{\gamma}
\def\d{\delta}
\def\e{\varepsilon}
\def\l{{\lambda}}
\def\r{\rho}

\def\t{\tau}

\def\w{\omega}
\def\f{{\varphi}}

\def\D{{\Delta}}
\def\G{{\Gamma}}

\def\C{{\mathbb C}}
\def\R{{\mathbb R}}
\def\Z{{\mathbb Z}}

\def\A{{\mathcal A}}

\def\supp{\operatorname{supp}}

\def\1{\mathbf 1}

\newcommand{\ov}[1]{\overline{#1}}
\newcommand{\til}[1]{\widetilde{#1}}
\newcommand{\wh}[1]{\widehat{#1}}



\def\N{{\mathbb N}}


\begin{document}

\title[Modules from group actions]
{Hilbert $C^*$-modules from group actions: beyond the finite orbits case}

\author{Michael Frank}
\thanks{This work is a part of the joint DFG-RFBR project (RFBR grant
07-01-91555 / DFG project ''K-Theory, $C^*$-Algebras, and Index
Theory''.)}
\address{HTWK Leipzig, FB IMN, Postfach 301166, D-04251 Leipzig, Germany}
\email{mfrank@imn.htwk-leipzig.de}
\urladdr{http://www.imn.htwk-leipzig.de/\~{}mfrank}

\author{Vladimir Manuilov}
\thanks{The second named author was also partially supported by the grant HIII\hspace{-2.3ex}%
\rule{1.9ex}{0.07ex}\,-1562.2008.1}
\address{Dept. of Mech. and Math., Moscow State University,
119991 GSP-1  Moscow, Russia\\ $.\quad {\it and}$  Harbin Institute of
Technology, Harbin, P. R. China} \email{manuilov@mech.math.msu.su}
\urladdr{http://mech.math.msu.su/\~{}manuilov}

\author{Evgenij Troitsky}
\thanks{The third named author was also partially supported by the RFBR grant 07-01-00046.}
\address{Dept. of Mech. and Math., Moscow State University,
119991 GSP-1  Moscow, Russia}
\email{troitsky@mech.math.msu.su}
\urladdr{http://mech.math.msu.su/\~{}troitsky}

\subjclass[2000]{Primary 46L08, Secondary 43A60, 54H20}

\begin{abstract}
Continuous actions of topological groups on compact Hausdorff spaces
$X$ are investigated which induce almost periodic functions in the
corresponding commutative $C^*$-algebra. The unique invariant mean on
the group resulting from averaging allows to derive a $C^*$-valued
inner product and a Hilbert $C^*$-module which serve as an environment
to describe characteristics of the group action. For uniformly continuous,
Lyapunov stable actions the derived invariant mean $M(\phi_x)$ is
continuous on $X$ for any element $\phi \in C(X)$, and the
induced $C^*$-valued inner product corresponds to a conditional
expectation from $C(X)$ onto the fixed point algebra of the action
defined by averaging on orbits.
In the case of selfduality of the Hilbert $C^*$-module all orbits
are shown to have the same cardinality. Stable actions on compact
metric spaces give rise to $C^*$-reflexive Hilbert $C^*$-modules.
The same is true if the cardinality of finite orbits is uniformly
bounded and the number of closures of infinite orbits is finite.
A number of examples illustrate typical situations appearing
beyond the classified cases.
\end{abstract}

\maketitle


\section{Introduction}

Investigating continuous group actions on topological spaces
several mathematical approaches may be applied. In the present
paper the authors continue their work started in
\cite{FMTZAA,TroPAMS} which relies on the Gel'fand duality of
locally compact Hausdorff spaces and commutative $C^*$-algebras.
In the dual picture some well-known results from functional
analysis and noncommutative geometry can be applied to get new
insights, often also for related noncommutative situations of
group actions on general $C^*$-algebras.

Consider a continuous action of a topological group $\Gamma$ on a
compact Hausdorff space $X$. Following the Gel'fand duality it can
be seen as a continuous action of $\Gamma$ on the commutative
$C^*$-algebra $C(X)$ of all continuous complex-valued functions on
$X$. Let us denote the subalgebra of $\Gamma$-invariant functions
on $X$ by $C_\Gamma(X)\subset C(X)$.

We wish to introduce the structure of a pre-Hilbert $C^*$-module
over $C_\G(X)$ on $C(X)$ which expresses significant properties of
the action of $\Gamma$ on $X$. One way to find suitable
$C^*$-valued inner products on $C(X)$ is the search for
conditional expectations $E: C(X) \to C_\Gamma(X)$ which are a
kind of mean over the group action of $\Gamma$ on $C(X)$ and
canonically give rise to the Hilbert $C_\Gamma(X)$-module
structures on $C(X)$ we are looking for. We followed that approach
in \cite{FMTZAA,TroPAMS} (see \cite{RieffelEpr98} for
a related discussion).

Here we want to consider a more general approach closer to the
topological background. For elements $\phi, \psi \in C(X)$ and for
the derived group maps
\begin{equation}\label{eq:innerprod1}
   \phi_x :\Gamma \to \C,\qquad \phi_x(g)=\phi(gx),
   \,\, (x \in X)
\end{equation}
we want to select a suitable normalized invariant mean $m_\Gamma$
on $\Gamma$ such that a $C_\Gamma(X)$-valued inner product on
$C(X)$ could be defined like
\begin{equation}\label{eq:innerprod}
   \<\phi,\psi\>(x) := m_\Gamma(\phi_x \overline{\psi_x}),
   \,\, (x \in X) \,.
\end{equation}
Of course, we would have to suppose $\Gamma$ to be amenable at
this point to warrant the existence of the (left) invariant mean
$m_\Gamma$. The product (\ref{eq:innerprod}) has to satisfy at
least the following two properties (with $\Gamma$-invariance
following from the definition):
\begin{enumerate}[1)]
    \item The resulting functions $m_\Gamma(\phi_x \overline{\psi_x})$
          are continuous in the argument $x \in X$.
    \item The value $\<\phi,\phi\>(x)$ is always positive,
          if $\phi(gx)\ne 0$ for some $g \in \Gamma$,
          some $x \in X$.
\end{enumerate}
One can observe that the property 2) would e.g. follow from the
following supposition:

\smallskip \noindent
2') For any $x \in X$ and any non-zero $\phi \in C(X)$ the
     map $\phi_x$ is a non-zero almost periodic function on
     $\Gamma$.

\smallskip \noindent
The supposition 2') would allow us:
\begin{itemize}
    \item to avoid the restriction on $\Gamma$ to be
          amenable,
    \item to overcome the dependence on the particular
          choice of $m_\Gamma$,
\end{itemize}
by passing from (\ref{eq:innerprod}) to
\begin{equation}\label{eq:innerprod2}
     \<\phi,\psi\>(x) := M(\phi_x \overline{\psi_x}),
\end{equation}
where the map $M:\Gamma \to C$ is the unique invariant mean on
almost periodic functions with respect to the given action of
$\Gamma$, when 1) and 2') are supposed to hold
(cf.~the appendix). The link to results in \cite{FMTZAA,TroPAMS}
is given by constructing a suitable conditional expectation
$E_\Gamma: C(X) \to C_\Gamma(X)$ by the rule

\smallskip \noindent
1') For any $\f\in C(X)$ the function
    $E_\Gamma(\phi)(x):=M(\phi_x)$ is continuous in $x$.

\smallskip \noindent
Properties 1') and 2') provide that the formula
(\ref{eq:innerprod}) makes $C(X)$ a pre-Hilbert $C^*$-module over
$C_\Gamma(X)$. Let us denote its completion by $L_\Gamma(X)$.

We are interested in two questions here:
\begin{enumerate}
\item
For which actions the conditions 1') and 2') hold?
\item
If they hold, what properties does $L_\Gamma(X)$ have?
\end{enumerate}

Our reference on almost periodic functions is \cite{DixmierC*}.
Hilbert $C^*$-modules were introduced in
\cite{RieffelInduced} and \cite{Paschke}.
For facts on Hilbert $C^*$-modules we refer the reader to
\cite{MaTroBook,Lance,RaeWil}. Recall that for a Hilbert
$C^*$-module $L$ over a $C^*$-algebra $A$ the $A$-dual module $L'$ is
the module of all bounded $A$-linear maps from $L$ to $A$. $L$ is
called self-dual (resp. $C^*$-reflexive) if $L=L'$ (resp. if $L=L''$).

\bigskip
Our paper is organized as follows: In the Section \ref{sec:uniform}
we will give some sufficient conditions for conditions 1') and 2')
to hold, hence, for the existence of $C_\Gamma(X)$-valued inner
products on the $C^*$-algebra $C(X)$. We also show that our type
of averaging is the same as the averaging over orbits.
Section \ref{sect:self-dual} deals with the more restrictive
situations in which the resulting Hilbert $C^*$-module turns out
to be self-dual. In Section \ref{sect:reflexive} we revisit the
situation of resulting $C^*$-reflexive Hilbert $C^*$-modules and
obtain an important restriction on $X$ to be supposed. In Section
\ref{sect:examples} we give some examples showing different
possible behavior of averaging. The Appendix is devoted to the
proof of the uniqueness of a measure used for averaging.

\section{Lyapunov stability and continuity of averaging}\label{sec:uniform}

We want to find conditions under which a well-defined averaging
over the group action on orbits exists in the case of infinite
orbits. For this aim we introduce additionally to the condition of
uniform continuity discussed in \cite{FMTZAA,TroPAMS} the
condition of Lyapunov stability. The latter condition ensures
uniform continuity, the well-definedness of averaging and the
existence of a conditional expectation onto the fixed-point
algebra which gives rise to a $C^*$-valued inner product and a
resulting Hilbert $C^*$-module structure. In subsequent sections
we can apply this tool to characterize those group actions on
compact Hausdorff spaces with infinite orbits.

\begin{dfn}
 We say that an action of a group $G$ on a locally compact
 Hausdorff space $X$ is {\it uniformly continuous\/} if for every point
 $x \in X$ and every neighborhood $U_x$ of $x$ there exists a neighborhood
 $V_x$ of $x$ such that $g(V_x) \subseteq U_x$ for every $g\in G_x$,
 where $G_x$ denotes the stabilizer of $x$.
\end{dfn}

\begin{teo}
Let an action of a topological group $\G$ on a compact Hausdorff space $X$
be uniformly continuous. If all orbits are finite and if their size
is uniformly bounded then the average $M(\varphi_x)$ is continuous
with respect to $x\in X$ for any $\varphi\in C(X)$.
\end{teo}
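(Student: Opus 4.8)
The plan is to reduce the continuity statement to a purely local, combinatorial analysis of how nearby orbits match up. Fix $\varphi\in C(X)$ and a point $x_0\in X$; write $n$ for the uniform bound on orbit sizes. Since for a finite orbit the invariant mean is just the normalized sum over the orbit, we have $M(\varphi_x)=\frac{1}{|\Gamma x|}\sum_{y\in\Gamma x}\varphi(y)$. The first step is to stratify $X$ by the function $x\mapsto|\Gamma x|$, i.e. to understand near $x_0$ which orbit cardinalities $k\le n$ can occur. Uniform continuity at $x_0$ gives a neighborhood $V_{x_0}$ with $g(V_{x_0})\subseteq U_{x_0}$ for all $g\in\Gamma_{x_0}$; I would use this, together with compactness, to show that the orbit size cannot drop ``too far'' under a small perturbation, and to pick representatives $g_1 x_0,\dots,g_k x_0$ of $\Gamma x_0$ together with disjoint neighborhoods $W_1,\dots,W_k$ of them.

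The second step is the key geometric input: for $x$ close to $x_0$, each point of $\Gamma x$ should lie in exactly one of the $W_i$ (so that $|\Gamma x|$ is a multiple of $k=|\Gamma x_0|$, say $\ell k$, with $\ell$ bounded by $n/k$), and in each $W_i$ the orbit $\Gamma x$ accumulates in a controlled way near $g_i x_0$. Concretely, I would show that $\frac{1}{\ell}\sum_{y\in\Gamma x\cap W_i}\varphi(y)$ is close to $\varphi(g_i x_0)$, uniformly for $x$ near $x_0$: each such $y$ is $g g_i x$ for some $g$, and as $x\to x_0$ one has $g g_i x\to g g_i x_0$; uniform continuity (applied at $g_i x_0$, whose stabilizer is a conjugate of $\Gamma_{x_0}$) forces the relevant group elements $g$ to move $x$ back near $x_0$, hence $y$ near the $\Gamma_{x_0}$-orbit of $g_i x_0$, which is $\{g_i x_0\}$ up to the identification. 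Summing over $i$ and dividing by $\ell k$, the factor $\ell$ cancels and $M(\varphi_x)$ is within $\varepsilon$ of $\frac1k\sum_i\varphi(g_i x_0)=M(\varphi_{x_0})$.

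The main obstacle I anticipate is precisely controlling the ``multiplicity'' $\ell$: a priori, as $x\to x_0$ several distinct points of $\Gamma x$ could collide into each $g_i x_0$, and one must show (a) this multiplicity is the same for every $i$ — which follows from $\Gamma$-equivariance, since translating by $g_j g_i^{-1}$ carries $\Gamma x\cap W_i$ bijectively onto $\Gamma x\cap W_j$ for $x$ near $x_0$ — and (b) that no point of $\Gamma x$ escapes the union $\bigcup W_i$; this last point is where compactness of $X$ and the uniform bound $n$ enter, since an escaping point would, after translating back, produce an accumulation point of orbits yielding a point of $\Gamma x_0$ outside $\{g_1 x_0,\dots,g_k x_0\}$, a contradiction for $x$ sufficiently close. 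Once the multiplicity is seen to be constant over the $W_i$ and the only obstruction, the cancellation makes the estimate routine, and letting $\varepsilon\to0$ gives continuity of $M(\varphi_x)$ at $x_0$; since $x_0$ was arbitrary, we are done.
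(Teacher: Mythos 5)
Your opening reduction --- for a finite orbit $\varphi_x$ is exactly periodic, so $M(\varphi_x)$ is the normalized sum over $\Gamma x$ and the problem becomes continuity of the orbit average --- is in fact the paper's \emph{entire} proof: the authors make exactly this observation and then cite Lemma 2.11 of \cite{FMTZAA} for the continuity of the orbit average. Everything after your first step is therefore an attempt to reprove that cited lemma, which is legitimate, and your architecture (disjoint neighborhoods $W_i$ of the points $g_ix_0$, equal multiplicities $\ell$ in each $W_i$, cancellation of $\ell$ in the average) is the right one.

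The genuine gap is in your justification of the crucial step that no point of $\Gamma x$ escapes $\bigcup_i W_i$ for $x$ near $x_0$. You attribute this to ``compactness of $X$ and the uniform bound $n$'', via the claim that an escaping point would produce an accumulation point of nearby orbits lying in $\Gamma x_0$. That implication is false for merely continuous actions: let $X=(\{0\}\cup\{1/n:n\in\mathbb N\})\times\{0,1\}$ and let $\Gamma=\bigoplus_{n}\mathbb Z_2$ act so that the $n$-th generator swaps $(1/n,0)\leftrightarrow(1/n,1)$ and fixes all other points. This action is continuous, $X$ is compact, all orbits have cardinality at most $2$, yet $(1/n,1)\in\Gamma(1/n,0)$ converges to $(0,1)\notin\Gamma(0,0)$, and the orbit average of $\varphi(t,s)=s$ is discontinuous at $(0,0)$. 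So compactness plus the orbit bound cannot carry this step; uniform continuity is the indispensable ingredient exactly here (the example above is not uniformly continuous at $(0,0)$, whose stabilizer is all of $\Gamma$). The correct argument is direct rather than by contradiction: write any $g\in\Gamma$ as $g=g_ih$ with $h\in\Gamma_{x_0}$; for $x\in V_{x_0}$ uniform continuity gives $hx\in U_{x_0}$, hence $gx=g_i(hx)\in g_i(U_{x_0})$, where $U_{x_0}$ is shrunk in advance so that the finitely many sets $g_i(U_{x_0})$ are pairwise disjoint, lie in $W_i$, and have $\varphi$-oscillation below $\varepsilon$ about $\varphi(g_ix_0)$. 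This repair also delivers the equal multiplicities for free, since then $\Gamma x\cap g_i(U_{x_0})=g_i(\Gamma_{x_0}x)$ for every $i$, sidestepping the careful consistent shrinking your $g_jg_i^{-1}$-translation argument would require. With that substitution your final estimate goes through.
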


\begin{proof}
If an orbit $\G x$ is finite then the function $\varphi_x$ on $\G$
is exactly periodic, hence
$$
M(\varphi_x)=\frac{1}{\#\G x}\sum_{gx\in \G x}\varphi(gx),
$$
so the average on $\G$ is the same as the average over orbits.
Continuity of the latter is provided by Lemma 2.11 from
\cite{FMTZAA}.
\end{proof}

Example \ref{ex:spiralalpha} below demonstrates that in the case
of presence of infinite orbits the uniform continuity is not
sufficient for the continuity of the average.

Now we generalize
an approach of \cite{TroPAMS} and introduce a condition which is sufficient
to overcome these difficulties.

Let $\Phi$ be a uniform structure on a compact space $X$. Recall from
\cite{Bourbaki-GeneralTopology} that, on a compact space, there is
a unique uniform structure compatible with its topology. It
consists of \emph{all} neighborhoods of the diagonal in $X\times
X$ \cite[Ch.~II, Sect.~4, Theorem~1]{Bourbaki-GeneralTopology}. If
$X$ is a metric space with a metric $d$ then the uniform structure
is the set of the neighborhoods of the diagonal $\Delta\subset
X\times X$ of the form $\{(x,y):x,y\in X, d(x,y)<\varepsilon\}$,
$\varepsilon\in(0,\infty)$.

\begin{dfn}
An action of a group $\Gamma$ on a topological space $X$ with a
uniform structure $\Phi$ compatible with its topology is called
{\it Lyapunov stable} if for any $\mathbb U\in\Phi$ and any
$x\in X$ there is $\mathbb V\in\Phi$ such that $(gx,gy)\in \mathbb
U$ for any $g\in\Gamma$ if $(x,y)\in \mathbb V$.
\end{dfn}

Note that in the case of a {\it metric} space, this definition
takes the following form:

 \begin{dfn}
\rm An action of a group $\G$ on a metric space $X$ is called
{\em Lyapunov stable\/} if for any $\e>0$ and any $x\in X$ there
exist $\d>0$ such that
 $$
\r(gx,gy)<\e\quad\mbox{for any}\quad g\in\G\quad{\rm if}\quad
\r(x,y)<\d.
 $$
 \end{dfn}

\begin{lem}
If an action of a discrete group $\G$ on a topological space $X$
with a uniform structure is Lyapunov stable then it is uniformly
continuous.
\end{lem}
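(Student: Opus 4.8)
The plan is to unwind the definitions and show that Lyapunov stability is a strengthening of uniform continuity. Recall the definition of uniform continuity of the action: for every $x \in X$ and every neighborhood $U_x$ of $x$ there is a neighborhood $V_x$ of $x$ with $g(V_x) \subseteq U_x$ for all $g$ in the stabilizer $\G_x$. So I need to produce, from the Lyapunov data, such a $V_x$, and the only extra hypothesis I may use is that $\G$ is discrete (which, as far as the proof goes, will actually be irrelevant — the statement holds for any topological group, but I will follow the hypothesis as stated).

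First I would fix $x \in X$ and a neighborhood $U_x$ of $x$. Since $X$ is equipped with a uniform structure $\Phi$ compatible with its topology, there is an entourage $\mathbb U \in \Phi$ such that the section $\mathbb U[x] = \{ y : (x,y) \in \mathbb U \}$ is contained in $U_x$; this is exactly how the uniform structure induces the topology. Apply Lyapunov stability at the point $x$ to this $\mathbb U$: there is $\mathbb V \in \Phi$ such that $(gx, gy) \in \mathbb U$ for every $g \in \G$ whenever $(x,y) \in \mathbb V$. Now set $V_x := \mathbb V[x] = \{ y : (x,y) \in \mathbb V \}$, which is a neighborhood of $x$.

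The verification is then immediate: take any $g \in \G_x$, so $gx = x$, and any $y \in V_x$, i.e. $(x,y) \in \mathbb V$. Lyapunov stability gives $(gx, gy) \in \mathbb U$, that is $(x, gy) \in \mathbb U$ since $gx = x$, hence $gy \in \mathbb U[x] \subseteq U_x$. Thus $g(V_x) \subseteq U_x$ for all $g \in \G_x$, which is precisely uniform continuity of the action at $x$. Since $x$ was arbitrary, we are done.

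There is essentially no obstacle here; the only point requiring a little care is the passage between entourages and neighborhoods — making sure that "a uniform structure compatible with the topology" lets us both (a) find an entourage whose $x$-section sits inside a prescribed neighborhood $U_x$, and (b) read off a neighborhood $V_x$ of $x$ from the entourage $\mathbb V$ produced by stability. Both are standard facts about uniformities (see \cite{Bourbaki-GeneralTopology}), and on a compact space they are even more transparent since, as recalled above, $\Phi$ is then the family of \emph{all} neighborhoods of the diagonal. Note in particular that discreteness of $\G$ plays no role in the argument; it is assumed here only because it is the setting needed in the sequel.
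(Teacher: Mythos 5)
Your proof is correct and is essentially identical to the paper's: both pick an entourage $\mathbb U$ with $\mathbb U(x)\subset U_x$, apply Lyapunov stability to obtain $\mathbb V$, and observe that for $g$ in the stabilizer of $x$ the pair $(gx,gy)=(x,gy)$ lands in $\mathbb U$, so $g(\mathbb V(x))\subset U_x$. Your added remarks (that discreteness of $\G$ is not used, and that on a compact space the entourages are just the neighborhoods of the diagonal) are accurate but not needed.
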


\begin{proof}
For $x\in X$ and for $\mathbb U\in\Phi$ set $\mathbb U(x):=\{y\in
X:(x,y)\in \mathbb U\}$. If $W$ is a neighborhood of $x$ then
there is $\mathbb U\in\Phi$ such that $\mathbb U(x)\subset W$. By
stability, there is $\mathbb V\in\Phi$ such that $(gx,gy)\in
\mathbb U$ for any $g\in\G$ if $(x,y)\in \mathbb V$. Now let
$g\in\G x$. Take any $y\in \mathbb V(x)$. Then $(x,gy)\in \mathbb
U$, hence $gy\in \mathbb U(x)\subset W$, i.e. $g(\mathbb
V(x))\subset W$ for any $g\in\G x$.
\end{proof}

In the case when all orbits are finite, uniform continuity is
equivalent to Lyapunov stability:
\begin{prop}
Let a discrete group act uniformly continuously on a compact
Hausdorff space $X$ and let all the orbits are finite. Then this
action is Lyapunov stable.
\end{prop}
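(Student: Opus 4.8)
The plan is to use the finiteness of the orbit of $x$ to reduce Lyapunov stability at $x$ to the continuity of finitely many homeomorphisms together with a single application of uniform continuity for the stabilizer. Fix $x\in X$ and $\mathbb U\in\Phi$; since $X$ is compact, $\mathbb U$ is simply a neighbourhood of the diagonal $\Delta\ss X\times X$. Write the (finite) orbit as $\G x=\{x_1,\dots,x_n\}$, pick $g_1,\dots,g_n\in\G$ with $g_ix=x_i$, and let $\G_x$ denote the stabilizer of $x$. For every $g\in\G$ one has $gx=x_i$ for some $i$, so $h:=g_i^{-1}g$ fixes $x$ and $g=g_ih$ with $h\in\G_x$; hence $(gx,gy)=(x_i,g_i(hy))$ for all $y$. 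Thus it suffices to arrange, for $y$ close enough to $x$, that $g_i(hy)$ lies in the slice $\mathbb U(x_i):=\{z\in X:(x_i,z)\in\mathbb U\}$ simultaneously for all $i=1,\dots,n$ and all $h\in\G_x$ — and it is precisely the finiteness of $\G x$ that makes ``for all $i$'' a finite requirement.

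Two routine remarks about the compact Hausdorff space $X$ will be used. First, each slice $\mathbb U(x_i)$ is a neighbourhood of $x_i$: choosing an open set $O$ with $\Delta\ss O\ss\mathbb U$, there is a box $A\times B\ss O$ with $(x_i,x_i)\in A\times B$, and then $x_i\in B\ss\mathbb U(x_i)$. Fix an open $O_i$ with $x_i\in O_i\ss\mathbb U(x_i)$; since $z\mapsto g_iz$ is a homeomorphism of $X$ carrying $x$ to $x_i$, the set $N:=\bigcap_{i=1}^ng_i^{-1}(O_i)$ is an open neighbourhood of $x$ with $g_i(N)\ss\mathbb U(x_i)$ for every $i$. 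Second, for every neighbourhood $W$ of $x$ there is $\mathbb V\in\Phi$ with $\mathbb V(x)\ss W$: take an open $W'$ with $x\in W'\ss W$ and put $\mathbb V:=(W'\times W')\cup\bigl((X\setminus\{x\})\times(X\setminus\{x\})\bigr)$, which is an open neighbourhood of $\Delta$ whose slice at $x$ is exactly $W'$.

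Now apply uniform continuity of the action at $x$ to the neighbourhood $N$: there is a neighbourhood $W$ of $x$ with $h(W)\ss N$ for every $h\in\G_x$. By the second remark choose $\mathbb V\in\Phi$ with $\mathbb V(x)\ss W$. This $\mathbb V$ works: if $(x,y)\in\mathbb V$, then $y\in W$, and for an arbitrary $g\in\G$, writing $g=g_ih$ with $h\in\G_x$ and $gx=x_i$, we get $hy\in h(W)\ss N$, hence $g_i(hy)\in\mathbb U(x_i)$, i.e.\ $(gx,gy)=(x_i,g_i(hy))\in\mathbb U$. Since $g$ was arbitrary, and then $x$ and $\mathbb U$ were arbitrary, the action is Lyapunov stable.

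The only mildly technical points are the two remarks about the uniform structure of a compact Hausdorff space — that slices of neighbourhoods of $\Delta$ are themselves neighbourhoods, and that arbitrarily small neighbourhoods of $x$ are realized as such slices — and both are standard. The genuine content, and the sole place where finiteness of orbits enters, is the factorization $g=g_ih$: it trades control over the infinitely many group elements for control over the $n$ fixed homeomorphisms $g_1,\dots,g_n$ (via the \emph{finite} intersection $N$) plus one invocation of uniform continuity for $\G_x$. This is exactly the step that has no analogue once infinite orbits are allowed, which is consistent with Example~\ref{ex:spiralalpha}.
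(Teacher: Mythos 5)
Your argument is correct, and its core is the same as the paper's: factor an arbitrary $g\in\G$ as $g=g_ih$ with $g_1,\dots,g_n$ fixed representatives hitting the finite orbit and $h$ in the stabilizer, control the finitely many $g_i$ by continuity (your finite intersection $N$), and control the stabilizer by a single application of uniform continuity; your two point-set remarks (slices of entourages are neighbourhoods, and small neighbourhoods of $x$ arise as slices) are both fine. The one substantive difference is the endgame. You stop at the pointwise statement that the definition literally asks for — an entourage $\mathbb V$ allowed to depend on $x$, realized via $\mathbb V=(W'\times W')\cup\bigl((X\setminus\{x\})\times(X\setminus\{x\})\bigr)$ — whereas the paper appends a compactness step: it covers $X$ by finitely many of the neighbourhoods $V^{x_j}$ and takes $\mathbb W_0=\bigcup_j V^{x_j}\times V^{x_j}$, obtaining a single entourage that works for \emph{all} base points simultaneously (equivalently, $\G(\mathbb W_0)\ss\mathbb W$ for the diagonal action). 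That uniform version is strictly stronger than the stated definition, but it is what the paper actually invokes later, e.g.\ in the proof of Proposition~\ref{prop:almperiod} (where one $\mathbb V$ is applied at every point $g_ix$ of an orbit closure) and on page~\pageref{pagefinpart} in Theorem~\ref{teo:infiniteselfdual}. So your proof fully establishes the proposition as stated, but to make it interchangeable with the paper's you would add the same finite-subcover upgrade at the end; nothing in your construction obstructs doing so.
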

\begin{proof}
Take a neighborhood $\mathbb W$ of the diagonal in $X\times X$ and
take a point $x\in X$. Since its orbit is finite, we can take a
finite set $\{g_1,\ldots,g_s\}$ of elements in $\G$ such that
$\{g_1x,\ldots,g_sx\}$ is the orbit $\G x$. Now find a
neighborhood $U^x$ of $x$ such that $g_i(U^x)\times
g_i(U^x)\subset \mathbb W$ for each $i=1,\ldots, s$.

Uniform continuity implies that there exists a neighborhood $V^x$
of $x$ such that $hy\in U^x$ for any $y\in V^x$ and any $h\in \G
x$. Since any $g\in \G$ can be written as $g=g_ih$ for some
$i=1,\ldots,s$ and for some $h\in\G x$, $g(V^x)=g_i(h(V^x))\subset
g_i(U^x)$.

It follows from compactness of $X$ that there is a finite number
of points $x_1,\ldots,x_r$ in $X$ such that the sets
$V^{x_1},\ldots,V^{x_r}$ form a finite covering for $X$. Then
$\mathbb W_0=V^{x_1}\times V^{x_1}\cup\ldots\cup V^{x_r}\times
V^{x_r}$ is a neighborhood of the diagonal in $X\times X$.

Take $(y,z)\in \mathbb W_0$. Then there is some $1\leq j\leq r$
such that $(y,z)\in V^{x_j}\times V^{x_j}$. Then $(gy,gz)\in
g_i(U^{x_j})\times g_i(U^{x_j})$ for some $i$. By construction,
$g_i(U^{x_j})\times g_i(U^{x_j})\subset \mathbb W$, so we conclude
that $(gy,gz)\in \mathbb W$ for any $g\in\G$ whenever $(y,z)\in
\mathbb W_0$.
\end{proof}

\begin{prop}\label{prop:almperiod}
Let a discrete group $\Gamma$ act {\rm Lyapunov stably} on a
compact Hausdorff space $X$ and let $\varphi:X\to\mathbb C$ be a
continuous function. Then, for any $x\in X$, the function
$\varphi_x:\Gamma\to \mathbb C$, $\varphi_x(g):=\varphi(gx)$, is
almost periodic.

\end{prop}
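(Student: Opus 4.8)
The plan is to verify directly that $\varphi_x$ satisfies the defining property of an almost periodic function on $\Gamma$: the set of its left translates $\{g \mapsto \varphi_x(hg) : h \in \Gamma\}$ is relatively compact in the uniform norm on bounded functions on $\Gamma$, equivalently (by the characterization in \cite{DixmierC*}) that this family is totally bounded. Observe that the left translate of $\varphi_x$ by $h$ is the function $g \mapsto \varphi(hgx) = \varphi(g(hx))$, so the orbit of $\varphi_x$ under translation is precisely $\{\varphi_y : y \in \Gamma x\}$, indexed by points of the orbit $\Gamma x \subseteq X$.

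First I would use that $X$ is compact Hausdorff, hence carries its unique compatible uniform structure $\Phi$. Fix $\varepsilon > 0$. For each point $y$ in the (compact) closure $\overline{\Gamma x}$, Lyapunov stability supplies a uniform entourage $\mathbb V_y \in \Phi$ such that $(gy, gz) \in \mathbb U_\varepsilon$ for all $g \in \Gamma$ whenever $(y,z) \in \mathbb V_y$, where $\mathbb U_\varepsilon$ is an entourage small enough that $(a,b) \in \mathbb U_\varepsilon$ forces $|\varphi(a) - \varphi(b)| < \varepsilon$ (such $\mathbb U_\varepsilon$ exists because $\varphi$ is uniformly continuous on the compact space $X$). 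The sets $\mathbb V_y(y)$ form an open cover of $\overline{\Gamma x}$; extract a finite subcover indexed by $y_1, \dots, y_n$. Then for any $y \in \Gamma x$ there is some $i$ with $(y_i, y) \in \mathbb V_{y_i}$, whence $(gy_i, gy) \in \mathbb U_\varepsilon$ and therefore $|\varphi_y(g) - \varphi_{y_i}(g)| = |\varphi(gy) - \varphi(gy_i)| < \varepsilon$ for every $g \in \Gamma$. Thus $\varphi_{y_1}, \dots, \varphi_{y_n}$ form a finite $\varepsilon$-net for the translation-orbit $\{\varphi_y : y \in \Gamma x\}$ in the sup-norm, so this orbit is totally bounded, which is exactly what almost periodicity of $\varphi_x$ requires.

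The main obstacle is the non-uniformity of the stability estimate: $\mathbb V_y$ genuinely depends on the base point $y$, so a single entourage need not work along the whole orbit. The compactness of $\overline{\Gamma x}$ is what rescues the argument — it lets finitely many of the pointwise-chosen entourages suffice — and it is essential that we pass to the closure of the orbit (not merely the orbit itself) to obtain a compact set over which to take a finite subcover. One should also note that the translated functions $\varphi_y$ are automatically bounded (by $\|\varphi\|_\infty$) since $X$ is compact, so there is no issue with the family lying outside the space of bounded functions; and the argument uses only left translates, which is enough for the notion of almost periodicity relevant to the unique invariant mean $M$ discussed in the appendix.
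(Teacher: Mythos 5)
Your proof is correct and takes essentially the same route as the paper's: both reduce almost periodicity to total boundedness of the family $\{\varphi_y : y\in\Gamma x\}$ in the sup norm, obtained by combining uniform continuity of $\varphi$ on the compact space $X$ with Lyapunov stability and a finite subcover of the compact orbit closure; you are in fact more careful than the paper about the dependence of the stability entourage on the base point. One small slip: $\varphi(hgx)\neq\varphi(g(hx))$ for non-abelian $\Gamma$, so the translates you actually control are $g\mapsto\varphi_x(gh)=\varphi_{hx}(g)$ (one-sided translates on the other side), but this is exactly the family the paper estimates as well, and relative compactness of either one-sided translate family is equivalent to almost periodicity, so the argument stands.
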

\begin{proof}
Take $\varepsilon>0$. Then compactness of $X$ implies existence of
some $\mathbb U\in\Phi$ such that
\begin{equation}\label{m1}
|\varphi(x)-\varphi(y)|<\varepsilon \ \ {\rm if}\ \  (x,y)\in
\mathbb U,
\end{equation}
since $\f$ is uniformly continuous and $\Phi$ consists of all
neighborhoods of the diagonal.

Stability implies that there is $\mathbb V\in\Phi$ such that
$(gx,gy)\in \mathbb U$ for any $g$ if $(x,y)\in\mathbb V$.

Denote the closure of the orbit $\Gamma x$ by $Y\subset X$. It is
a compact subset. Compactness of $Y$ implies that one can find in
$Y$ a finite number of points of the form $g_ix$, $i=1,\ldots,s$,
such that for any $p\in Y$ there is some $i$, for which
$(g_ix,p)\in \mathbb V$ and, therefore, for any $g,h\in\Gamma$,
\begin{equation}\label{m2}
(hg_ix,hgx)\in \mathbb U \, .
\end{equation}

Then the functions $L_{g_i}\varphi_x$, $i=1,\ldots,s$, form an
$\varepsilon$-net for the set $\{L_g\varphi_x : g\in\Gamma\}$, with
respect to the uniform norm. Indeed, for any $g\in\Gamma$, there is
an index $i$ such that (\ref{m2}) holds. Then, by (\ref{m1}), we have
$$
\sup_{h\in\Gamma}|(L_g\varphi_x)(h)-(L_{g_i}\varphi_x)(h)|
=\sup_{h\in\Gamma}|\varphi(hgx)-\varphi(hg_ix)|<\varepsilon.
$$

\end{proof}

So, under the conditions of Proposition \ref{prop:almperiod} the
invariant mean $M(\varphi_x)$ is well-defined on $C(X)$.

 \begin{teo}
 \label{teo:avercorrlyap}
 Let a discrete group $\G$ act on a compact Hausdorff space $X$.
 If the action is Lyapunov stable, then the conditional expectation
 $E_\G: C(X) \to C_\Gamma(X)$ defined by $E_\Gamma(\phi)(x)=M(\phi_x)$
 is well-defined, i.e. the conditions 1') and 2') hold.
 \end{teo}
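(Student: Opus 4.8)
The plan is to read off 1') and 2') from Proposition~\ref{prop:almperiod} together with the elementary properties of the invariant mean. Proposition~\ref{prop:almperiod} supplies the essential input: for every $\f\in C(X)$ and every $x\in X$ the function $\f_x$ is almost periodic on $\G$. Since the almost periodic functions form a $C^*$-algebra closed under complex conjugation, $M(\f_x)$ and, more generally, $M(\f_x\ov{\psi_x})$ are well-defined; and $M$ is an invariant state, so $|M(h)|\le\norm{h}_\infty$ and $M(h)\ge 0$ whenever $h\ge 0$, and it is moreover faithful on non-negative functions (this is standard; cf.\ \cite{DixmierC*} and the appendix). Condition 2') is then immediate: $\f_x$ is almost periodic by Proposition~\ref{prop:almperiod}, it is non-zero exactly when $\f$ does not vanish identically on the orbit $\G x$, and in that case $\<\f,\f\>(x)=M(\f_x\ov{\f_x})>0$ by faithfulness --- which is precisely the property 2) required in the Introduction.

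The real content is 1'), the continuity of $x\mapsto E_\G(\f)(x)=M(\f_x)$, and here Lyapunov stability does exactly the work it was designed for. Fix $\f\in C(X)$, a point $x_0\in X$, and $\e>0$. Since $X$ is compact, $\f$ is uniformly continuous for the unique uniform structure $\Phi$ on $X$, so there is $\mathbb U\in\Phi$ with $|\f(p)-\f(q)|<\e$ whenever $(p,q)\in\mathbb U$. Applying Lyapunov stability at $x_0$, choose $\mathbb V\in\Phi$ such that $(gx_0,gy)\in\mathbb U$ for \emph{every} $g\in\G$ whenever $(x_0,y)\in\mathbb V$. Then for every $y$ in the neighbourhood $\mathbb V(x_0)=\{y\in X:(x_0,y)\in\mathbb V\}$ of $x_0$ we have
$$
\norm{\f_{x_0}-\f_y}_\infty=\sup_{g\in\G}|\f(gx_0)-\f(gy)|\le\e ,
$$
and, $\f_{x_0}-\f_y$ being again almost periodic,
$$
|E_\G(\f)(x_0)-E_\G(\f)(y)|=|M(\f_{x_0}-\f_y)|\le\norm{\f_{x_0}-\f_y}_\infty\le\e .
$$
Hence $E_\G(\f)$ is continuous at $x_0$; as $x_0$ was arbitrary, $E_\G(\f)\in C(X)$, and invariance of $M$ gives $E_\G(\f)(gx)=E_\G(\f)(x)$, so in fact $E_\G(\f)\in C_\G(X)$. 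Thus both 1') and 2') hold; the remaining conditional-expectation properties of $E_\G$ (unitality, positivity, and the bimodule identity $E_\G(a\f b)=a\,E_\G(\f)\,b$ for $a,b\in C_\G(X)$) then follow formally from the corresponding properties of the state $M$, exactly as in \cite{FMTZAA,TroPAMS}.

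The one step that genuinely uses the hypothesis is the passage from Lyapunov stability to the uniform bound $\norm{\f_{x_0}-\f_y}_\infty\le\e$: it is crucial that the $\mathbb V$ produced by stability controls the pairs $(gx_0,gy)$ \emph{simultaneously for all} $g\in\G$, which is exactly what distinguishes Lyapunov stability from plain continuity of the action --- and, as Example~\ref{ex:spiralalpha} shows, mere uniform continuity would not suffice. I expect this to be the only place where the hypothesis does any real work; once the definition of Lyapunov stability is unwound, the remainder of the argument is formal.
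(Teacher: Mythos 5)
Your proof is correct, but it takes a genuinely different route to condition 1') than the paper does. The paper proves continuity of $x\mapsto M(\f_x)$ by unwinding the Hewitt--Ross construction of the mean: it picks $h_1,\dots,h_p\in\G$ so that the two-sided average $\frac 1p\sum_j D_{h_j}\f_x$ is uniformly $\e$-close to a constant on $\G\times\G$, transports this approximation from $\f_x$ to $\f_y$ via Lyapunov stability, and accumulates constants to reach $|M(\f_y)-M(\f_x)|<9\e$. You instead invoke the abstract fact that the unique invariant mean $M$ is a state on the $C^*$-algebra of almost periodic functions on $\G$, hence linear and contractive, which collapses the whole argument to the single estimate $\norm{\f_{x_0}-\f_y}_\infty\le\e$ supplied by Lyapunov stability plus uniform continuity of $\f$ on the compact space $X$. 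Your version is shorter and, notably, is exactly the device the authors themselves use one theorem later (in the proof of Theorem~\ref{equiv.aver}, where they write $|M(\f_{x_1})-M(\f_{x_2})|=|M(\f_{x_1}-\f_{g_\e x_2})|\le\sup_g|\cdots|$); the paper's longer route buys only self-containedness relative to the explicit construction of $M$ in \cite{HewRos}. Your treatment of 2') via almost periodicity (Proposition~\ref{prop:almperiod}) together with faithfulness of $M$ on nonzero nonnegative almost periodic functions is also fine and is in fact slightly more explicit than the paper, which simply cites Proposition~\ref{prop:almperiod} for everything except continuity. The identification of where the hypothesis does its work --- the uniformity in $g$ of the entourage $\mathbb V$ --- is exactly right.
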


\begin{proof}
By Proposition~\ref{prop:almperiod} we only need to verify the
continuity of the mean $M(\f_x)$ with respect to $x\in X$. Let
$x\in X$ and $\e>0$ be arbitrary. Let us remind (cf.~\cite[pp.
250--251]{HewRos}) that we can choose
 $h_1,\dots,h_p\in \G$ in such a way that the uniform distance on $\G\times\G$
between the function
 $$
\frac 1p \sum_{j=1}^p D_{h_j}\f_x : \G\times\G\to\C \qquad ({\rm
where}\quad (D_h \psi)(g_1,g_2):=\psi(g_1hg_2))
 $$
and some constant is less then $\e$. In this case the uniform
distance satisfies the inequality
 $$
\left\|M(\f_x)-\frac 1p \sum_{j=1}^p D_{h_j}\f_x  \right\|_u <
2\e.
 $$
Let us choose a neighborhood $\mathbb V\in\Phi$ such that
 $$
|\f(gy)-\f(gx)|<\e,\qquad \mbox{for any }g\in\G,\:(x,y)\in \mathbb
V.
 $$
This neighborhood can be found  as in the proof of
Proposition~\ref{prop:almperiod}: first we can find a neighborhood
$\mathbb U$ such that $|\f(y)-\f(z)|<\e$ whenever $(y,z)\in
\mathbb U$ (using compactness of $X$). Then, by stability of the
action, we can find for this $\mathbb U$ another neighborhood
$\mathbb V\in\Phi$ such that $(gy,gx)\in \mathbb U$ for any
$g\in\G$ whenever $(y,x)\in \mathbb V$.

Then for any $y\in \mathbb U(x)$ one has
\begin{eqnarray*}
&&\left(\frac 1p \sum_{j=1}^p D_{h_j}\f_y\right) (g_1,g_2)=\\
&&\qquad = \left(\frac 1p \sum_{j=1}^p D_{h_j}\f_x\right)\,
(g_1,g_2)+ \frac 1p \sum_{j=1}^p\left( \f_y(g_1 h_j g_2)-\f_x(g_1
h_j g_2)\right)
\\
 &&\qquad
= \left(\frac 1p \sum_{j=1}^p D_{h_j}\f_x\right)\, (g_1,g_2)+
\frac 1p \sum_{j=1}^p\left( \f(g_1 h_j g_2y)-\f(g_1 h_j g_2
x)\right).
\end{eqnarray*}
Each term of the second summand is less then $\e$. Hence, the
second summand is less then $\e$. Thus,
 $$
\left\|M(\f_x)-\frac 1p \sum_{j=1}^p D_{h_j}\f_y  \right\|_u <
3\e
 $$
for any $y \in \mathbb U(x)$.
Therefore, considering $M(\f_x)$ as an arbitrary constant, we have
 $$
\left\|M(\f_y)-\frac 1p \sum_{j=1}^p D_{h_j}\f_y  \right\|_u <
6\e,
 $$
and finally,
 $$
\left|M(\f_y)- M(\f_x) \right| < 9\e
 $$
for any $y \in \mathbb U(x)$. Consequently, $E_\Gamma(\phi)$ is
$\Gamma$-invariant and continuous on $X$.
\end{proof}

For $x\in X$ let us denote its orbit $\Gamma x$ by $\gamma$ and
the closure of the orbit $\gamma$ in $X$ by $\overline{\gamma}$.

\begin{teo}\label{equiv.aver}
Let a discrete group $\G$ act on a compact Hausdorff space $X$.
\begin{enumerate}[\rm 1)]
  \item For a Lyapunov stable action and for the unique
    invariant mean $M: \Gamma \to \mathbb C$ we have the equality
    \begin{equation}\label{eq:twoaverages}
    M(\f_x)=\int_{\ov\g} \f|_{\ov\g}\,d\mu_{\ov\g},
    \end{equation}
    where $x\in\g$, $\mu_{\ov\g}$ is a (unique) invariant
    measure on ${\ov\g}$ of total mass 1.
  \item If $\g$ is finite, then $M(\f_x)$, $x\in\g$,
    can be taken as the standard average, as it was considered in
    {\rm \cite{FMTZAA,TroPAMS}}.
\end{enumerate}
\end{teo}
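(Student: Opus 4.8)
The plan is to prove part 1) first and then deduce part 2) as a special case. For part 1), the strategy is to identify $M(\varphi_x)$ with an integral against an invariant measure on $\overline{\gamma}$ by a uniqueness argument. The starting observation is that $\overline{\gamma}$ is a compact $\Gamma$-invariant subset of $X$, so by the appendix (the uniqueness of the averaging measure) there exists a unique $\Gamma$-invariant Radon probability measure $\mu_{\overline{\gamma}}$ on $\overline{\gamma}$; here I would invoke that the Lyapunov stable action restricted to $\overline{\gamma}$ is still Lyapunov stable, and that Proposition~\ref{prop:almperiod} guarantees that all functions $\varphi_x$ arising from points of $\overline{\gamma}$ are almost periodic, so the invariant mean $M$ on these functions is well-defined and unique. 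Thus both sides of (\ref{eq:twoaverages}) are well-defined; it remains to show they agree.

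The key step is to realize the right-hand side of (\ref{eq:twoaverages}) as an invariant mean, and then appeal to uniqueness. Concretely, for fixed $x \in \gamma$ with $\overline{\Gamma x} = \overline{\gamma}$, define a functional on almost periodic functions on $\Gamma$ by $\Lambda(\psi) := \int_{\overline{\gamma}} \til{\psi}\, d\mu_{\overline{\gamma}}$, where $\til{\psi}$ is the (unique, by density of the orbit and almost periodicity) continuous extension to $\overline{\gamma}$ of the function $g x \mapsto \psi(g)$ defined on the dense orbit. One checks this is well-defined: if $\psi = \varphi_x$ for $\varphi \in C(X)$, then $\til{\psi} = \varphi|_{\overline{\gamma}}$ directly, and the almost periodic functions relevant to (\ref{eq:twoaverages}) are exactly of this form. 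Then $\Lambda$ is positive, normalized ($\Lambda(\mathbf 1) = 1$), and $\Gamma$-invariant: left translation of $\psi$ by $h$ corresponds to the homeomorphism $gx \mapsto hgx$ of $\overline{\gamma}$, which preserves $\mu_{\overline{\gamma}}$ by invariance. Since the invariant mean on almost periodic functions is unique, $\Lambda = M$, which is precisely (\ref{eq:twoaverages}).

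For part 2), when $\gamma = \Gamma x$ is finite, the closure $\overline{\gamma} = \gamma$ is a finite set, and the unique invariant probability measure on it is the normalized counting measure $\mu_{\gamma} = \frac{1}{\#\gamma}\sum_{y \in \gamma} \delta_y$; substituting into (\ref{eq:twoaverages}) gives $M(\varphi_x) = \frac{1}{\#\Gamma x}\sum_{y \in \Gamma x}\varphi(y)$, which is exactly the standard orbit average used in \cite{FMTZAA,TroPAMS}. The main obstacle I anticipate is the careful verification that the continuous extension $\til{\psi}$ exists and is well-defined for the relevant almost periodic functions — i.e., that the map $gx \mapsto \varphi_x(g) = \varphi(gx)$ on the dense orbit genuinely extends continuously to $\overline{\gamma}$. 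This is where Lyapunov stability does the essential work: it is precisely the condition (used already in the proof of Proposition~\ref{prop:almperiod}) that translates uniform continuity of $\varphi$ on $X$ into equicontinuity statements along the orbit, ensuring that Cauchy-type behavior of points $g_n x \to p \in \overline{\gamma}$ forces convergence of $\varphi(g_n x)$. Once this extension is under control, the rest is a routine invocation of the uniqueness theorem from the appendix.
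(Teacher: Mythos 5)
Your overall strategy --- realize the right-hand side of (\ref{eq:twoaverages}) as a normalized invariant mean on the almost periodic functions $\varphi_x$ and then invoke the uniqueness of the invariant mean --- runs in the opposite direction to the paper's argument, which builds the \emph{measure} out of $M$ rather than the mean out of the measure: the paper first shows, using Lyapunov stability, that $M(\varphi_{x_1})=M(\varphi_{x_2})$ for any two points $x_1,x_2\in\overline{\gamma}$ (not merely points of the same orbit), so that $m(\phi):=M(\phi_x)$ is a well-defined bounded positive functional on $C(\overline{\gamma})$; it then obtains $\mu_{\overline{\gamma}}$ from the Riesz--Markov--Kakutani theorem, observes that this measure is invariant, and uses the Appendix only to conclude that it is the \emph{unique} such measure.

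This difference exposes a genuine gap in your proposal: the \emph{existence} of the invariant probability measure $\mu_{\overline{\gamma}}$. You cite the Appendix for ``a unique $\Gamma$-invariant Radon probability measure,'' but the Appendix lemma proves only that the orbit closure carries \emph{not more than one} invariant regular measure; it provides no existence statement. Existence of an invariant measure for a group action on a compact space is not automatic (it fails, for instance, for boundary actions of free groups), and for a general discrete, possibly non-amenable $\Gamma$ it cannot be obtained by a fixed-point or averaging argument over the group. Since your entire derivation of $\Lambda=M$ takes $\mu_{\overline{\gamma}}$ as given, the missing existence step is precisely the hard part of the theorem. It can be filled in two ways: either define the measure via $M$ itself (constancy of $M(\varphi_x)$ on $\overline{\gamma}$ plus Riesz representation), which collapses your argument back to the paper's, or prove existence independently, e.g.\ by noting that equicontinuity (Lyapunov stability) and the Ascoli theorem make the closure of $\Gamma$ in $\mathrm{Homeo}(\overline{\gamma})$ a compact group acting transitively on $\overline{\gamma}$, whose Haar measure pushes forward to the desired $\mu_{\overline{\gamma}}$; with that supplement your route becomes a valid, genuinely different proof. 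The remaining ingredients of your argument --- that the continuous extension of $gx\mapsto\varphi_x(g)$ is simply $\varphi|_{\overline{\gamma}}$, that $\Lambda$ is a normalized positive invariant functional, that the invariant mean on almost periodic functions is unique, and the deduction of part 2) from part 1) via the normalized counting measure --- are sound.
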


\begin{proof}
Evidently, 2) follows from 1).

Let us show that for a $\f \in C(X)$ the left-hand side of
(\ref{eq:twoaverages}) does not depend on $x\in\ov\g$. First,
evidently, it does not depend on the choice of $x$ inside the same
orbit. Hence, it is sufficient to verify that the value is the
same for $g x_2$ sufficiently close to any $x_1$ for $x_1, x_2 \in
\overline{\gamma}$ to demonstrate the invariantness with respect
to the action of $\Gamma$. By the Lyapunov
stability property, for any $\e>0$ we can find an element
$g_\e\in\G$ such that $g_\e x_2$ is so close to $x_1$ that
$|\f(g\,x_1)-\f(g\,g_\e x_2)|<\e$ for any $g\in \G$. Then
\begin{eqnarray*}
  |M(\f_{x_1})-M(\f_{x_2})| &=& |M(\f_{x_1})-M(\f_{g_\e x_2})|= |M(\f_{x_1}-\f_{g_\e x_2})| \\
    &\le & \sup_{g\in\G} |\f(g\,x_1)-\f(g\,g_\e x_2)| <\e.
\end{eqnarray*}
Since $\e$ is arbitrary small, $M(\f_{x_1})=M(\f_{x_2})$ and the
value is constant on the closure of orbits.

A similar estimation implies the continuity of this (well-defined
by the above argument) functional $m:C(\ov\g)\to\C$,
$m(\phi)=M(\phi_x)$ for $x \in \overline{\gamma}$, with respect to
the variation of closures of orbits. By the Riesz-Markov-Kakutani
theorem \cite[Theorem~3, Sect.~IV.6]{DanSch}, $m$ has the form
$$
m(f)=\int_{\ov\g} f\,d\mu,
$$
where $\mu$ is some regular countably additive complex measure on
$\ov\g$. Evidently, $\mu$ is invariant. It remains to explain why
$\mu$ ia unique. In fact, this follows from \cite[Ch.~VII, \S~1,
Problem~14]{Bourb.Int.II}. We give details in the Appendix.
\end{proof}

\section{Self-duality}\label{sect:self-dual}

After a characterization of the inner structure of Hilbert
$C^*$-modules that arise from Lyapunov stable actions we are
going to describe the interrelation between certain properties of
the action and self-daulity of the resulting Hilbert $C^*$-module.

\begin{lem}\label{lem:LyapunovGelfand}
Let a discrete group $\G$ act on a compact Hausdorff space $X$.
If the action is Lyapunov stable then any two orbits are either
separated from each other, or have the same closure. Thus,
closures of orbits are separated sets in $X/\G$. The Gelfand
spectrum of $C_\G(X)$ is the set of closures of orbits.
\end{lem}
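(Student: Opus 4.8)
The plan is to prove Lemma~\ref{lem:LyapunovGelfand} in three stages: first the dichotomy for pairs of orbits, then the identification of the quotient structure, and finally the computation of the Gelfand spectrum of $C_\G(X)$ from that structure.

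\medskip

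\noindent\textbf{Step 1: the dichotomy.} Suppose $\overline{\G x}$ and $\overline{\G y}$ are not separated, i.e. $\overline{\G x}\cap\overline{\G y}\ne\emptyset$; I want to show $\overline{\G x}=\overline{\G y}$. By symmetry it suffices to prove $\overline{\G y}\subseteq\overline{\G x}$, and since the right-hand side is closed and $\G$-invariant it suffices to show $y\in\overline{\G x}$. Pick a point $z\in\overline{\G x}\cap\overline{\G y}$. The key tool is that Lyapunov stability propagates approximations uniformly along the group: given $\e>0$ and a finite family of continuous functions (or better, using the uniform structure directly), if $z'$ is sufficiently close to $z$ then $gz'$ is close to $gz$ for \emph{all} $g\in\G$ simultaneously. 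Concretely, fix $\mathbb U\in\Phi$; by stability at $z$ there is $\mathbb V\in\Phi$ with $(gz,gw)\in\mathbb U$ for all $g$ whenever $(z,w)\in\mathbb V$. Since $z\in\overline{\G y}$ there is $g_0\in\G$ with $(z,g_0 y)\in\mathbb V$, hence $(gz,g g_0 y)\in\mathbb U$ for every $g$. Since $z\in\overline{\G x}$, for any $\mathbb W\in\Phi$ there is $h\in\G$ with $(hx,z)\in\mathbb W$. Combining, $g_0 y$ lies in the closure of $\G x$: indeed $h g_0^{-1}$ applied appropriately brings a point of $\G x$ near $g_0 y$ — more carefully, one runs the same argument with the roles arranged so that $y$ itself is approximated by the orbit $\G x$. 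The cleanest route is to show that $z\in\overline{\G x}$ forces $\overline{\G z}=\overline{\G x}$ (using that $\overline{\G z}\subseteq\overline{\G x}$ trivially, and that $z\in\overline{\G y}$ with stability at $z$ lets one push any neighborhood of a point of $\G y$ into $\overline{\G z}$), and symmetrically $\overline{\G z}=\overline{\G y}$, whence $\overline{\G x}=\overline{\G y}$. This first step is where the real content sits and is the main obstacle: one must use stability at the accumulation point $z$ (not at $x$ or $y$) to transfer proximity along the whole group orbit, and keep careful track of which point stability is invoked at.

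\medskip

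\noindent\textbf{Step 2: the quotient.} From Step~1, distinct orbit closures are disjoint, so the collection of orbit closures partitions $X$. Each orbit closure is a closed (hence compact) $\G$-invariant subset, and the partition into orbit closures is exactly the partition of $X/\G$ into the closures of points. Because the members of the partition are compact and pairwise disjoint (and $X$ is compact Hausdorff, hence normal), any two of them can be separated by a continuous $\G$-invariant function; this is what is meant by ``closures of orbits are separated sets in $X/\G$.'' Formally, given orbit closures $C_1\ne C_2$, by normality there are disjoint open $O_1\supseteq C_1$, $O_2\supseteq C_2$, and then by averaging (Theorem~\ref{equiv.aver}, applying $E_\G$ to an Urysohn function) one obtains a function in $C_\G(X)$ separating them.

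\medskip

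\noindent\textbf{Step 3: the Gelfand spectrum.} The Gelfand spectrum of the commutative $C^*$-algebra $C_\G(X)$ is the space of its characters, equivalently the maximal ideal space. Every function in $C_\G(X)$ is constant on each orbit closure $\overline{\g}$ — it is $\G$-invariant and continuous, hence constant on $\G x$ and by continuity on $\overline{\G x}$ — so $C_\G(X)$ is naturally a subalgebra of $C(Q)$, where $Q$ is the set of orbit closures with the quotient topology. By Step~2 and Stone--Weierstrass, $C_\G(X)$ separates the points of $Q$ and contains the constants, and $Q$ is compact Hausdorff (a quotient of the compact Hausdorff $X$ with closed point-preimages, the preimages being the orbit closures); hence $C_\G(X)=C(Q)$ and its spectrum is $Q$, the set of closures of orbits. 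I expect Steps~2 and~3 to be routine given Step~1 and the averaging machinery already established; the whole weight of the lemma rests on the propagation-of-proximity argument in Step~1.
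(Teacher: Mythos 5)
Your proof is correct in substance and rests on the same mechanism as the paper's: invoke Lyapunov stability \emph{at the point where the two orbits accumulate} to propagate proximity along the entire group. The paper's version is a short contradiction argument: assuming $y\in\ov{\G x}$ but $g_0y\notin\ov{\G x}$, it takes $\mathbb V$ by stability, finds $(y,g_2x)\in\mathbb V$, and concludes $(g_0y,g_0g_2x)\in\mathbb U$ — contradiction. Your Step 1 organizes this around a common point $z\in\ov{\G x}\cap\ov{\G y}$ and proves the (slightly stronger) statement that intersecting closures coincide; that is a legitimate variant and in fact cleaner, since it handles both inclusions symmetrically via $\ov{\G z}$. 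The one place where your write-up stops short is the conclusion of Step 1: you already have $(gz,gg_0y)\in\mathbb U$ for \emph{every} $g$, and you only need to substitute $g=g_0^{-1}$ to get $(g_0^{-1}z,\,y)\in\mathbb U$, i.e. $y$ is $\mathbb U$-close to the point $g_0^{-1}z\in\G z\subseteq\ov{\G x}$; since $\mathbb U$ is arbitrary and $\ov{\G x}$ is closed, $y\in\ov{\G x}$. The excursion with $h$, $\mathbb W$ and ``roles arranged appropriately'' is unnecessary. Steps 2 and 3 spell out what the paper compresses into one sentence (``the quotient is Hausdorff, hence coincides with the Gelfand spectrum'') and are fine.

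One caution on Step 2: you cite Theorem \ref{equiv.aver} to produce the separating invariant function, but in the paper the uniqueness of the invariant measure in Theorem \ref{equiv.aver} is proved in the Appendix, and the Appendix invokes Lemma \ref{lem:LyapunovGelfand} itself; quoting Theorem \ref{equiv.aver} here would therefore be circular. What you actually need is only Theorem \ref{teo:avercorrlyap} (continuity and invariance of $x\mapsto M(\f_x)$, whose proof is independent of this lemma) together with the normalization $M(\mathbf 1)=1$, $M(0)=0$ and the fact that $M(u_x)$ depends only on $u|_{\ov{\G x}}$: applying $E_\G$ to an Urysohn function that is $1$ on one orbit closure and $0$ on the other then separates them. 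Alternatively, one can avoid averaging altogether by noting that invariant entourages form a fundamental system of the uniform structure (stability gives $\bigcup_{g\in\G}g(\mathbb V)\subseteq\mathbb U$), so disjoint orbit closures have disjoint saturated open neighborhoods, making the quotient Hausdorff directly — which is essentially what the paper asserts.
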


\begin{proof}
Suppose, two orbits $\g=\G x$ and $\g'=\G y$ are not separated but
have distinct closures. This means (after a shift, if necessary)
that $y\in \ov{\g}$, but $g_0 y\not\in \ov{\g}$ for some
$g_0\in\G$. Then there exists some $\mathbb U$ of the uniform
structure $\Phi$ such that there are no points of the form $(g_0 y,g_1
x)$ in $\mathbb U$. Let us take a neighborhood $\mathbb V\in\Phi$
corresponding to $\mathbb U$ by the definition of Lyapunov
stability. Take $(y,g_2 x)\in \mathbb V$. Then $(g_0 y, g_0 g_2
x)\in \mathbb U$. Take $g_1=g_0 g_2$. A contradiction.

Thus, the quotient space of closures of orbits is Hausdorff and,
hence, it coincides with the Gelfand spectrum of $C_\Gamma(X)$.
\end{proof}

\begin{teo}\label{teo:opisanieLgamma}
Let a discrete group $\G$ act on a compact Hausdorff space $X$.
In the case of a Lyapunov stable action the module $L_\G(X)$ has
the following description: it consists of all functions $\psi:X\to
\C$ such that
\begin{enumerate}[\rm 1)]
\item $\psi|_{\ov \g}\in L_2(\ov \g,\mu_{\ov \g})$, where
$\mu_{\ov \g}$ is a unique normalized invariant measure on ${\ov \g}$
for any orbit $\g$,
\item for any $\f\in C(X)$ the function $\<\psi,\f\>_L$ is continuous.
\end{enumerate}

\noindent In particular, the average $\<\psi, \mathbf 1\>_L$ of
such a function $\psi$ is continuous on $X/\G$.
\end{teo}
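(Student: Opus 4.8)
The plan is to identify $L_\G(X)$ directly from its construction as the completion of $(C(X),\<\cdot,\cdot\>_L)$ over $C_\G(X)$, using the fact established in Theorem~\ref{equiv.aver} that $\<\phi,\psi\>_L(x)=\int_{\ov\g}\phi\ov\psi\,d\mu_{\ov\g}$ for $x\in\g$, together with the description of the Gelfand spectrum of $C_\G(X)$ from Lemma~\ref{lem:LyapunovGelfand} as the space $X/\!\!\sim$ of closures of orbits. First I would let $N$ denote the set of functions $\psi:X\to\C$ satisfying conditions 1) and 2), equipped with the same $C_\G(X)$-valued form $\<\psi,\f\>_L(x)=\int_{\ov\g}\psi\ov\f\,d\mu_{\ov\g}$; condition 1) guarantees the integral is finite and condition 2) guarantees it lies in $C_\G(X)$, so the form makes sense on $N$. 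I would first check that $N$ is a Hilbert $C^*$-module: the module action of $C_\G(X)$ is pointwise multiplication (a $\G$-invariant function is constant on each $\ov\g$, so it multiplies into $L_2(\ov\g,\mu_{\ov\g})$), positivity of $\<\psi,\psi\>_L$ follows since each fibre integral is $\ge 0$ and vanishes on $\ov\g$ only if $\psi|_{\ov\g}=0$ $\mu_{\ov\g}$-a.e., and completeness follows fibrewise: a Cauchy sequence in $N$ is, after evaluating at the orbit-closure $\ov\g$, Cauchy in $L_2(\ov\g,\mu_{\ov\g})$, and one checks the limiting assignment again satisfies 2).

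Next I would show $C(X)\subseteq N$ and that $C(X)$ is dense in $N$, which gives $L_\G(X)\subseteq N$ with the inner products agreeing, hence $L_\G(X)=N$ once density is known. Continuity of $\<\f,\psi\>_L$ for $\f,\psi\in C(X)$ is exactly condition 1') plus the polarization identity from Theorem~\ref{teo:avercorrlyap}, and $\f|_{\ov\g}$ is continuous hence in $L_2(\ov\g,\mu_{\ov\g})$, so $C(X)\subseteq N$. For density, the key point is a fibrewise approximation: given $\psi\in N$ and $\ov\g$, continuous functions are dense in $L_2(\ov\g,\mu_{\ov\g})$, and by the Tietze extension theorem a continuous function on the compact set $\ov\g$ extends to $C(X)$; the module-completion norm on $N$ is $\|\psi\|^2=\sup_x\<\psi,\psi\>_L(x)=\sup_{\ov\g}\int_{\ov\g}|\psi|^2\,d\mu_{\ov\g}$, so I would need to control this supremum over all orbit-closures simultaneously. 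This is where I would invoke the separatedness of orbit-closures in $X/\G$ from Lemma~\ref{lem:LyapunovGelfand}: one can partition $X/\G$ into clopen pieces on which the relevant $L_2$-norms are uniformly close, run a finite Tietze-type patching on each piece, and glue. I expect this uniform gluing argument to be the main obstacle, since one must reconcile the discrete, fibrewise nature of the $L_2$-structure with the requirement of landing inside $C(X)$.

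Finally, the ``in particular'' clause is immediate: taking $\f=\mathbf 1\in C(X)$ in condition 2) shows $\<\psi,\mathbf 1\>_L(x)=\int_{\ov\g}\psi\,d\mu_{\ov\g}$ is continuous in $x$, and since it depends only on $\ov\g$ it descends to a continuous function on $X/\G$ by the description of the Gelfand spectrum in Lemma~\ref{lem:LyapunovGelfand}. The only genuine work, then, is the density of $C(X)$ in $N$; everything else is a matter of assembling Theorems~\ref{teo:avercorrlyap} and~\ref{equiv.aver} and Lemma~\ref{lem:LyapunovGelfand} with the standard fibrewise $L_2$ facts.
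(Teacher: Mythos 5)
Your overall architecture is the same as the paper's: show (a) that $C(X)$ satisfies conditions 1) and 2) (via the identification $\int_{\ov\g}\psi|_{\ov\g}\,d\mu_{\ov\g}=M(\psi_x)$ from Theorem~\ref{equiv.aver} together with Theorem~\ref{teo:avercorrlyap}), and (b) that $C(X)$ is dense in the set $N$ of functions satisfying 1) and 2), the density being established fibrewise by approximating $\psi|_{\ov\g}$ in $L_2(\ov\g,\mu_{\ov\g})$ by a continuous function and extending by Tietze. You also correctly locate the only real difficulty in the gluing of these fibrewise approximants into a single element of $C(X)$.

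However, the mechanism you propose for the gluing step does not work. You suggest partitioning $X/\G$ into clopen pieces; but the Gelfand spectrum $\til{X/\G}$ of $C_\G(X)$ is merely a compact Hausdorff space and in general has no nontrivial clopen subsets (e.g.\ for an isometric irrational rotation acting fibrewise on $[0,1]\times S^1$ the orbit space is $[0,1]$, which is connected). The paper's argument avoids this: for a fixed test function $\f$ with $\|\f\|_L\le 1$ and each orbit closure $\ov\g$, after choosing $f_{\ov\g}$ with $\int_{\ov\g}|\psi-f_{\ov\g}|^2\,d\mu<\e^2$ and a Tietze extension $\wh f_{\ov\g}$, one uses condition 2) (continuity of $\<\psi,\f\>_L$) together with Theorem~\ref{teo:avercorrlyap} (continuity of $\<\wh f_{\ov\g},\f\>_L$) to find an \emph{open} neighborhood $U_{\ov\g}$ in $\til{X/\G}$ on which $|\<\psi-\wh f_{\ov\g},\f\>|<2\e$ persists; then a finite subcover and a subordinate partition of unity $\w_i$ produce $f=\sum_i\w_i\wh f_{\ov{\g_i}}\in C(X)$ with $\sup|\<\psi-f,\f\>|\le 2\e$. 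Note also that the paper controls the \emph{pairings} $\<\psi-f,\f\>$ against continuous $\f$ rather than the quantity $\sup_{\ov\g}\int_{\ov\g}|\psi-f|^2\,d\mu_{\ov\g}$ you propose to bound: condition 2) gives continuity only of pairings with elements of $C(X)$, not of $\<\psi,\psi\>_L$ itself, so the weak formulation is what makes the open-neighborhood argument available. This use of condition 2) as the engine of the covering argument is the idea missing from your sketch; with it, your plan becomes the paper's proof.
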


\begin{proof}
We should prove the following two facts: a) the set of continuous
functions on $X$ satisfies these conditions, and b) it is dense in this
set with respect to the $C^*$-valued inner product on the module $L_\G(X)$.

Condition 1) of the assertion above should be interpreted via the
equality
\begin{equation}\label{eq:closure-and-orbit}
    \int_{\ov{\g}} \psi|_{\ov \g}\,d\mu_{\ov \g} =M(\psi_x),\qquad \psi\in C(X),\quad x\in\g.
\end{equation}
This
equality follows from two facts: i) the left-hand side depends
only on $\psi|_\g$ and defines an invariant mean on almost
periodic functions on $\G$, ii) such a mean is unique. (see Theorem \ref{equiv.aver})

Thus, by the results of Proposition \ref{prop:almperiod} and
Theorem \ref{teo:avercorrlyap} of Section \ref{sec:uniform},
condition a) is fulfilled.

Now take an arbitrary function $\psi(x)$ satisfying the conditions 1) and 2)
of the assertion above, an arbitrary function  $\f\in C(X)$ with $\|\f\|_L\le 1$,
and an arbitrary small $\e>0$. Consider the closure of
an orbit $\ov{\g}$. Choose a continuous function $f_{\ov{\g}}: \ov{\g}\to\C$
such that
\begin{equation}\label{eq:closephigamma}
    \int_{\ov{\g}} \left|\psi|_{\ov \g}-f_{\ov{\g}}\right|^2\,d\mu < \e^2.
\end{equation}
By normality of $X$, $f_{\ov{\g}}$ can be extended to a continuous
function $\wh f_{\ov{\g}}:X\to \C$. There exists a neighborhood
$U_{\ov{\g}}$ of $\ov{\g}$ in the Gelfand spectrum $\til{X/\G}$ of
$C_\G(X)$, such that
\begin{equation}\label{eq:sravnvokr}
     \int_{\ov{\g''}} (\psi|_{\ov {\g''}}-\wh f_{\ov{\g}}|_{\ov{\g''}})
     \ov{\f}_{\ov {\g''}}\,d\mu
     < 2\e, \qquad {\ov {\g''}}\in U_{\ov{\g}}.
\end{equation}
This follows from (\ref{eq:closephigamma}) and 2). Choose a finite
subcovering $U_{\ov{\g_i}}$ of $\til{X/\G}$ and a subordinated partition
of unity $\w_i$, $i=1,\dots,I$.
This can be done by Lemma \ref{lem:LyapunovGelfand}.
Then $\sup|\<\psi-f,\f\>|\le 2\e$,
where
$$
f=\sum_{i=1}^I \w_i \wh f_{\ov{\g_i}}.
$$
Thus $f\in C(X)$ is $2\e$-close to $\psi$.
\end{proof}

\begin{teo}\label{teo:infiniteselfdual}
Let a discrete group $\G$ act on a compact Hausdorff space $X$.
\begin{enumerate}[\rm 1)]
\item Suppose, the module $L_\G(X)$ is self-dual
and the Gelfand spectrum $\til{X/\G}$ of the algebra
of continuous
invariant functions $C_\G(X)$ has no isolated points. Then
there are only finitely many $\ov\g$ with infinite $\g$
and all finite orbits have the same cardinality.

\item If there are only finitely many $\ov\g$ with infinite $\g$
and all finite orbits have the same cardinality,
then the module $L_\G(X)$ is self-dual.
\end{enumerate}
\end{teo}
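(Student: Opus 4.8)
The plan is to exploit the explicit description of $L_\G(X)$ from Theorem~\ref{teo:opisanieLgamma} together with the identification of the Gelfand spectrum $\til{X/\G}$ as the space of closures of orbits (Lemma~\ref{lem:LyapunovGelfand}). For part~2), I would argue directly that under the hypotheses every bounded $C_\G(X)$-linear functional on $L_\G(X)$ is represented by an element of $L_\G(X)$. The key structural fact is that $\til{X/\G}$ decomposes: the finitely many points corresponding to closures of infinite orbits, together with a clopen complement, over which all orbits are finite of one fixed cardinality $n$. Over the finite-orbit part the module is, fibrewise, a copy of $\C^n$ with the averaged inner product, so $L_\G(X)$ restricted there is a finitely generated projective module; over the finitely many isolated-behaviour points the fibre is an $L_2$-space of a compact probability space, which is a Hilbert space and hence self-dual as a module over $\C$. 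I would first treat the two pieces separately, showing self-duality of each, and then glue: because the decomposition is into finitely many clopen pieces, a dual functional splits as a finite direct sum and the representing elements glue to a global section satisfying conditions 1) and 2) of Theorem~\ref{teo:opisanieLgamma}. The continuity condition 2) is automatic here because there are only finitely many infinite-orbit closures, so no continuity constraint propagates across them.

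For part~1), the plan is contrapositive in spirit: assume $\til{X/\G}$ has no isolated points, and show that self-duality forces finitely many infinite-orbit closures and uniform finite cardinality. The mechanism is the same as in the classical argument (compare the finite-orbit case in \cite{FMTZAA,TroPAMS}): if infinitely many distinct closures $\ov{\g_k}$ of infinite orbits accumulated somewhere, or if the cardinalities of finite orbits were unbounded, one constructs a ''diagonal'' functional $\tau$ on $L_\G(X)$ by choosing, near each such $\ov{\g_k}$, a bump function $\psi_k$ supported (in $X/\G$) near $\ov{\g_k}$ and normalised so that $\<\psi_k,\psi_k\>\to\mathbf 1$ locally but with mass spread out, and setting $\tau(\f)=\sum_k \lambda_k\<\psi_k,\f\>$ with $\lambda_k\to 0$ slowly. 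One checks $\tau$ is bounded $C_\G(X)$-linear. If $\tau$ were represented by some $\psi\in L_\G(X)$, then condition 1) of Theorem~\ref{teo:opisanieLgamma} would force $\psi|_{\ov{\g_k}}\in L_2(\ov{\g_k},\mu_{\ov{\g_k}})$ with norms controlled, contradicting the chosen growth of the $\lambda_k$; the hypothesis that $\til{X/\G}$ has no isolated points is what allows the bumps to be genuinely spread over a continuum of orbit-closures rather than being forced to concentrate. Similarly, unbounded cardinality among finite orbits near a non-isolated point lets one make $\<\psi_k,\psi_k\>$ behave like $\tfrac1{n_k}(\text{sum of }n_k\text{ point masses})$ with $n_k\to\infty$, and the same estimate obstructs a representing element.

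Concretely I would proceed in steps: (i) record that, by Lemma~\ref{lem:LyapunovGelfand}, $\til{X/\G}$ is the set of orbit closures and that $C_\G(X)=C(\til{X/\G})$; (ii) for part~2), prove the clopen decomposition $\til{X/\G}=F\sqcup(\bigsqcup_{\text{finite}}\{\ov{\g_j}\})$ where over $F$ all orbits are finite and all of the same cardinality — here one uses that finitely many points are automatically clopen in the complement of a closed set, and that the function $\ov\g\mapsto\#\g$ on the finite-orbit locus is locally constant by uniform continuity (cf.\ the argument behind Theorem~1.1 / Lemma~2.11 of \cite{FMTZAA}); (iii) show self-duality over $F$ via the projective-module description and over each remaining point via Hilbert-space self-duality, then glue using the partition of unity coming from the finite clopen cover, exactly as in the density argument of Theorem~\ref{teo:opisanieLgamma}; (iv) for part~1), build the obstructing functional $\tau$ and derive the contradiction from condition~1) of Theorem~\ref{teo:opisanieLgamma}. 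The main obstacle I anticipate is step~(ii)–(iii) of part~2): verifying that ''finitely many infinite-orbit closures'' really does give a clopen splitting of the spectrum and that the finite-orbit piece is genuinely finitely generated projective — the cardinality function must be shown locally constant, and one must be careful that the averaged $C^*$-valued inner product on the fibre $\C^n$ is nondegenerate and that a global frame exists. The contrapositive construction in part~1) is technically fiddly (choosing the $\lambda_k$ and checking boundedness of $\tau$) but conceptually routine once the right accumulation point is fixed using the no-isolated-points hypothesis.
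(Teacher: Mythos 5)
Your part~2) follows the paper's own route (the clopen decomposition $X=X_f\sqcup\ov\g_1\sqcup\dots\sqcup\ov\g_n$, self-duality of each summand, and gluing) and is fine in outline. The genuine problems are in part~1). First, your obstructing functional $\tau(\f)=\sum_k\lambda_k\<\psi_k,\f\>$ with $\lambda_k\to 0$ is self-defeating: if $\lambda_k\to 0$ and $\<\psi_k,\psi_k\>\approx\mathbf 1$ near $\ov\g_k$, then the candidate representative $\psi=\sum_k\lambda_k\psi_k$ satisfies $\<\psi,\psi\>\approx\lambda_k^2\to 0$ along the $z_k$, so $\<\psi,\psi\>$ extends continuously by $0$ to any accumulation point, $\psi$ meets both conditions of Theorem~\ref{teo:opisanieLgamma}, and $\tau=\<\psi,\cdot\>$ \emph{is} represented in $L_\G(X)$. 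The paper's construction is the opposite: it keeps the $L^2$-normalisation fixed, $\<h,h\>(z_i)\ge 1-2^{-i}>1/2$ for all $i$ (so $\<h,h\>$ cannot be continuous at an accumulation point of the $z_i$, whence $h\notin L_\G(X)$), and instead makes the $L^1$-mass decay, $\int_{\ov\g_i}|f_i|\,d\mu_i=1/\sqrt i\to 0$, which is what makes $\<h,\f\>$ continuous for every $\f$, i.e.\ $h\in L_\G(X)'$. The decay must sit in the $L^1$/$L^2$ discrepancy of the bumps, not in scalar coefficients.

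Second, you have no working substitute for the step where the no-isolated-points hypothesis actually enters. The paper invokes the theorem of Pavlov--Troitsky \cite{PavTro}: a self-dual Hilbert module over $C(\til{X/\G})$ with $\til{X/\G}$ having no isolated points is finitely generated projective. This yields a generating system of some cardinality $N$, hence (via Tietze and the surjectivity of restriction to orbit closures) the uniform bound $\#\g\le N$ on finite orbits; only with that uniform bound does one prove that $X_f$ is closed, and only then can one restrict to $X_f$ and quote the finite-orbit results of \cite{FMTZAA,TroPAMS} to conclude that all finite orbits have the \emph{same} cardinality. Your sketch addresses only the failure mode of \emph{unbounded} cardinalities, but the conclusion also excludes bounded, non-constant cardinalities (e.g.\ $2$-point orbits degenerating to fixed points), and your remark that the cardinality function is ``locally constant by uniform continuity'' is not a free fact --- it is essentially the statement to be proved. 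Without the Pavlov--Troitsky step (or an equivalent argument turning self-duality plus absence of isolated points into finite generation), $X_f$ need not be closed and the reduction to the known finite-orbit case is unavailable; the heuristic about bumps ``spread over a continuum'' does not use the hypothesis in any checkable way.
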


\begin{proof}
1) By \cite{PavTro}, the restriction on the Gelfand spectrum
implies that $L_\G(X)$ is finitely generated projective. Let $N$
be the cardinality of some of its generator systems. Thus, the
number of points of each finite orbit is $\le N$. This follows
from the epimorphity of the restriction map $L_\G(X)\to L_\G(Y)$,
where $Y\ss X$ is a closed $\G$-invariant set. Indeed, $Y$ is a
closed set in a normal space, hence continuous functions on it are
extendable by the Tietze theorem.

In this situation of uniform boundness of the cardinality of
finite orbits, the subset $X_f$, formed by all finite orbits,
is a closed (invariant) subset of
$X$.\label{pagefinpart}
Indeed, suppose, an infinite orbit $\g$ is in the
closure of $X_f$. Choose a cover of $X$ by (a finite number of)
open sets $U_i$, such that no one of them is covered by the
others, and $\g$ is covered by more than $N$ of these $U_i$'s. Let
$\mathbb{U}=\cup_i (U_i\times U_i)$ be an element of the uniform
structure on $X$. Then there exists another neighborhood $\mathbb
V$ of the diagonal in $X\times X$ such that $\G(\mathbb V)\ss
\mathbb U$ under the diagonal action. Choosing a finite orbit (of
cardinality $\le N$) $\mathbb V$-close to $\g$ we obtain a
contradiction to properties of $\mathbb U$.

Thus, as above, $L_\G(X_f)$ is finitely generated. Moreover, it is
projective, because the projection associated with a canonical
isometric embedding of the finitely generated projective
$C(\til{X/\G})$-module $L_\G(X)$ into a standard finitely
generated $C(\til{X/\G})$-module $C(\til{X/\G})^N$, say $\pi:
C(\til{X/\G})^N \to L_\G(X)$, restricted to $X_f$ gives an
epimorphic idempotent mapping
$$
\pi':C(\til{X_f/\G})^N \to L_\G(X_f),
$$
defined by the restriction of matrix elements of the projection $\pi$.
Epimorphity follows from the above argument which relied on the Tietze
theorem.

We arrive to the case considered in \cite{FMTZAA} and
\cite{TroPAMS}. As it is explained in Theorem \ref{equiv.aver},
the average over finite orbits is the same as in these papers, and
the inner product is the same. Thus, $L_\G(X_f)=C(X_f)$. By the
results of \cite{FMTZAA} and \cite{TroPAMS}, under our assumptions
this module is finitely generated projective if and only if all
(finite) orbits have the same cardinality.

Now we pass to proving the statement about infinite orbits.
Suppose there exists an infinite number of closures of infinite
orbits: $\ov\g_i$, $i=1,2,\dots$.
We need to construct a $C_\G(X)$-functional on $L_\G(X)$,
which is not an element of $L_\G(X)$.

Passing to a subsequence, if necessary, we can assume that for
each point $z_i\in \til{X/\G}$ representing $\ov\g_i$, we can
choose an open neighborhood $U_i$, such that $U_i\cap U_j
=\emptyset$ if $i\ne j$. Indeed, suppose the opposite. This
implies that for one of the points, say $z_1$, and any its
neighborhood only finitely many points from the set
$\{z_1,z_2,\dots\}$ are off this neighborhood (i.e.,
$z_2,z_3,\dots \to z_0$). We choose a neighborhood $U'_1\ni z_0$,
such that there is $z_{i_1}\not\in U'_1$, and (by normality) a
neighborhood $U''_1$ of $z_0$, such that $\ov{U''_1}\ss U'_1$ and
there is a neighborhood $U_1$ of $z_{i_1}$, such that $U_1\cap
\ov{U''_1}= \varnothing$. Take $U'_2\ss U''_1$, such that there
exists $z_{i_2}\in U''_1$, $z_{i_2}\not\in U'_2$. Take, by
normality, $U''_2\ni z_{i_0}$, such that $U''_2\ss \ov{U''_2}\ss
U'_2$ and there exists a neighborhood $U_2$ of $z_{i_2}$ such that
$U_2\ss U''_1$ and $U_2\cap \ov{U''_2}=\varnothing$. And so on.
Finally, collect the points $\{ z_{i_n} \}$ which have the
required property.

Let us define $f_i:\ov\g_i\to\{0, {\sqrt i}\}$ to be the indicator
functions of subsets of $\ov\g_i$ with $\mu_i(\supp f_i)=\frac 1
i$ (where $\mu_i$ is the invariant measure on $\ov\g_i$ of total
mass 1). Thus $f_i\in L^2(\ov\g_i,\mu_i)$,
$\<f_i,f_i\>_{\ov\g_i}=\int_{\ov\g_i}|f_i|^2 \,d\mu_i=1$ and
$\int_{\ov\g_i}|f_i| \,d\mu_i=1/{\sqrt i}$.

 Choose $\a_i\in C(X)$ ($i=1,2,\dots$) such that
\begin{enumerate}[\rm 1)]
\item $\supp \a_i \ss p^{-1}(U_i)$, where $p:X\to \til{X/\G}$
is the canonical projection;
\item $\a_i(X)\ss \left[0,\frac 1{\sqrt i}\right]$,
$\a_i(\ov\g_i)= \left[0,\frac 1{\sqrt i}\right]$;
\item $\|\a_i|_{\ov\g_i}-f_i\|_{L^2} < \frac 1{2^i}$,
$\|\a_i|_{\ov\g_i}\|^2_{L^2}-\|f_i\|^2_{L^2} < \frac 1{2^i}$;
\item $\int_{\ov\g} | \a_i|_{\ov\g}| \,d\mu_{\ov\g}\le \frac 1{\sqrt i}+\frac 1{2^{i-1}}$ for any $\ov\g$;
\item $\int_{\ov\g} | \a_i|_{\ov\g}|^2 \,d\mu_{\ov\g}\le 1+ \frac 1{2^{i-1}}$ for any $\ov\g$.
\end{enumerate}
To construct these functions we first approximate $f_i$ by an
appropriate continuous function, then extend by the Tietze
theorem, and finally multiply by an appropriate partition of unity
function. More precisely, we first choose a continuous function
$\a'_i: \ov\g_i\to \left[0,1/\sqrt{i}\right]$, restricted to
satisfy properties 2 and 3. Then we extend by the Tietze theorem
$\a'_i$ to a continuous function $\a''_i: X\to
\left[0,1/\sqrt{i}\right]$. By Theorem \ref{teo:opisanieLgamma}
the functions
$$
\<\a''_i,1\>_L:\til{X/\G}\to [0,+\infty),\qquad
\<\a''_i,\a''_i\>_L:\til{X/\G}\to [0,+\infty),
$$
are continuous and
$$
\<\a''_i,1\>_L(z_i)\in\left(\frac 1{\sqrt i}-\frac 1{2^{i}},\frac 1{\sqrt i}+\frac 1{2^{i}}\right)
,\qquad
\<\a''_i,\a''_i\>_L(z_i)\in\left(1-\frac 1{2^{i}},1+\frac 1{2^{i}}\right).
$$
Choose a neighborhood $U'_i\ss U_i$ of $z_i$ such that
$$
\<\a''_i,1\>_L(z_i)\in\left(\frac 1{\sqrt i}-\frac 1{2^{i-1}},\frac 1{\sqrt i}+\frac 1{2^{i-1}}\right)
,\qquad
\<\a''_i,\a''_i\>_L(z_i)\in\left(1-\frac 1{2^{i-1}},1+\frac 1{2^{i-1}}\right).
$$
Let $\w_i:\til{X/\G}\to [0,1]$ be a continuous function with
$\w_i(z_i)=1$ and $\supp \w_i\in U'_i$. Put $\wh\w_i:=p^*\w_i:X\to
[0,1]$ and $\a_i:=\wh\w_i \a''_i$. They are the required ones.

Define a function $h:X\to [0,+\infty)$ to be equal to $\a_i$
on $p^{-1}(U_i)$ ($i=1,2,\dots$) and 0 otherwise.
First, we wish to show that $h\not\in L_\G(X)$. Indeed,
$\<h,h\>_L$ is greater than $1-\frac 1{2^i}>1/2$
at each $z_i$ and vanishes at any accumulation
point of $\{z_i\}$.

Now let us show that $h\in L_\G(X)'$. Let $\f$ be a continuous function
on $Y$ such that $\|\<\f,\f\>\|_L\le 1$. Then for any $\ov\g$ in some
$p^{-1}(U_i)$ we have (using property 5)
$$
|\<h,\f\>|_{\ov\g}|=|\<\a_i|_{\ov\g},\f|_{\ov\g}\>|\le
\<\a_i|_{\ov\g},\a_i|_{\ov\g}\>^{1/2}\cdot \<\f,\f\>^{1/2} \le 2.
$$
For the remaining $\ov\g$'s this product vanishes.
It remains to show that $\<h,\f\>$ is a continuous (invariant)
function, i.e. that for any $\e>0$ and any point $\ov\g_0$
from the closure of $\cup_i p^{-1}(U_i)$ there is an invariant
neighborhood $W$ of $\ov\g_0$ such that
$$
\int_{\ov\g} \ov{h_{\ov\g}} \cdot \f|_{\ov\g} \,d\mu_{\ov\g} <\e
$$
for any $\ov\g \in W$. Choose $W$ not intersecting with $p^{-1}(U_i)$
for $i=1,\dots,k$, where $k>\max\left(2,\left(\frac{2\sup_{x\in X} |\f(x)|}\e\right)^2\right)$.
Then (beyond the trivial cases) $\ov\g\in p^{-1}(U_i)$, $i>k$.
Let us estimate using property 4):
\begin{eqnarray*}
  \left|\int_{\ov\g} \ov{h_{\ov\g}} \cdot \f|_{\ov\g} \,d\mu_{\ov\g}\right|  & \le&
  \sup_{x\in X} |\f(x)|\cdot \int_{\ov\g } | \a_i|_{\ov\g} | \,d\mu_{\ov\g}
=\sup_{x\in X} |\f(x)|\cdot \left(\frac 1{\sqrt i}-\frac 1{2^{i-1}},\frac 1{\sqrt i}+\frac 1{2^{i-1}}\right)\\
   &<&  \sup_{x\in X} |\f(x)|\cdot \frac 2{\sqrt i}<\e
\end{eqnarray*}
for $i>k$.
Hence, the module is not self-dual.

2) As it is explained in the first part of the proof, in this case \label{pagefinpart2}
$$
X=X_f \sqcup \ov\g_1 \sqcup \dots \sqcup \ov\g_n,
$$
$$
L_\G(X)=L_\G(X_f)\oplus L^2(\ov\g_1,\mu_{\ov\g_1}) \oplus L^2(\ov\g_n,\mu_{\ov\g_n}),
$$
\begin{eqnarray*}
  (L_\G(X))'_{C_\G(X)} &=& (L_\G(X_f))'_{C_\G(X_f)}\oplus (L^2(\ov\g_1,\mu_{\ov\g_1}))'_{C_\G(\ov\g_1)}
  \oplus (L^2(\ov\g_n,\mu_{\ov\g_n}))'_{C_\G(\ov\g_n)} \\
   &=&(L_\G(X_f))'_{C_\G(X_f)}\oplus (l^2(\C))'_\C \oplus\dots \oplus (l^2(\C))'_\C\\
   &=&(L_\G(X_f))'_{C_\G(X_f)}\oplus L^2(\ov\g_1,\mu_{\ov\g_1}) \oplus L^2(\ov\g_n,\mu_{\ov\g_n}).
\end{eqnarray*}
As it was explained above $L_\G(X_f)=C(X_f)$ in this case, and $(C(X_f))'_{C_\G(X_f)}=C(X_f)$.
\end{proof}

\begin{ex}
Let $Y$ be the cone given by the equation $x^2+y^2=z^2$, $Z\subset
Y$ be the subset of all points with $z\in
J=\{0,1,1/2,1/3,\ldots\}$. Then $Z$ is an infinite collection of
circles with one limit point $(0,0,0)$ added. Let $X$ be a union of three
distinct copies of $Z$. To describe an action of $\Z$ on $Z$ number
the circles in the double-cone consecutively by numbers of $\Z$
where the number zero is fixed to the point $(0,0,0)$.
Consider the discrete group $\G=\Z\oplus\Z_3$, where $\Z$ acts on
each circle by an irrational rotation by an angle $\a_i$
$(i=1,2,\dots)$, where $\a_i\to 0$, and where $\Z_3$ transposes the cones.
Then the module $L_\G(X)$ is not self-dual since the orbits are all
infinite except for the fixed-point.
\end{ex}

\section{$C^*$-Reflexivity}\label{sect:reflexive}
\section*{4.1. \ The metric case}

In this section we would like to understand, in which situations the
Hilbert $C^*$-module $L_\G(X)$ is $C^*$-reflexive over $C_\G(X)$.
Our previous partial results \cite{FMTZAA,TroPAMS} made us believe
that the Hilbert $C^*$-module $L_\G(X)$ is $C^*$-reflexive in much
more general situations beyond the finite orbit case. It turns out
that any countably generated module over a wide class of commutative
$C^*$-algebras is $C^*$-reflexive.

\begin{teo}\label{mishchenko-trofimov}
Let $X$ be a compact metric space. Then any countably generated
module over $C(X)$ is $C^*$-reflexive.
\end{teo}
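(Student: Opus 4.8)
The plan is to prove the statement in two stages, reducing first to the case of a module sitting inside a standard Hilbert module, and then exploiting the metrizability of $X$ to force reflexivity. First I would recall that a countably generated Hilbert $C^*$-module $L$ over a $\sigma$-unital $C^*$-algebra $A$ embeds, by Kasparov's stabilization theorem, as an orthogonally complemented submodule of the standard module $H_A=\ell^2(A)$; here $A=C(X)$ is unital, so this applies. Thus there is a projection $P\in\End_A(H_A)$ (adjointable, hence bounded and $A$-linear) with $L=P(H_A)$. It is a standard fact that the dual $L'$ is then $P''(H_A'')$, where $H_A''$ is the bidual module of $H_A$, and $P''$ the natural extension of $P$. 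Hence the whole problem reduces to understanding $H_A''$ for $A=C(X)$, $X$ compact metric: if one shows that the relevant submodules of $H_A''$ obtained this way already lie in $H_A$, reflexivity of $L$ follows.

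The core of the argument is therefore an analysis of bounded $C(X)$-linear functionals on $H_{C(X)}=\ell^2(C(X))$. Such a functional $f$ is given by a ``row'' $(f_1,f_2,\dots)$ with $f_i\in C(X)$ and $f(a_1,a_2,\dots)=\sum_i f_i a_i$, the series converging in norm for every $(a_i)\in\ell^2(C(X))$; boundedness of $f$ translates into the condition that the partial sums $\sum_{i=1}^n |f_i(x)|^2$ are uniformly bounded in $n$ and $x$, i.e. $\sup_x \sum_i |f_i(x)|^2 = \|f\|^2 <\infty$. For $f$ to define an element of $H_{C(X)}$ itself, one needs in addition that $\sum_{i=1}^n |f_i|^2$ converges \emph{uniformly} on $X$, equivalently that $g_n(x):=\sum_{i>n}|f_i(x)|^2 \to 0$ uniformly. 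Each $g_n$ is continuous, nonnegative, and the sequence $(g_n)$ is pointwise decreasing to $0$. Here is where compactness together with metrizability enters: by Dini's theorem a pointwise-decreasing sequence of continuous functions on a compact space converging pointwise to a continuous limit ($0$, which is continuous) converges uniformly. Hence every bounded $C(X)$-linear functional on $H_{C(X)}$ already lies in $H_{C(X)}$, i.e. $H_{C(X)}$ is self-dual. (Metrizability is not even strictly needed for Dini, but it is harmless to include it as in the hypothesis; what matters is compactness and continuity of the pointwise limit.)

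Once $H_{C(X)}$ is self-dual, I would finish as follows. For the orthogonally complemented submodule $L=P(H_{C(X)})$, any bounded $A$-linear $g\colon L\to A$ extends to $g\circ P\colon H_{C(X)}\to A$, which by self-duality is represented by an element $\xi\in H_{C(X)}$; then $P\xi\in L$ represents $g$. Thus $L=L'$, so in particular $L=L''$ and $L$ is $C^*$-reflexive (indeed self-dual). The main obstacle is the passage through the bidual / stabilization machinery cleanly — one must be careful that the countably generated module need not itself be complemented in $H_A$ a priori, which is exactly what Kasparov's theorem supplies, and that ``bounded $A$-linear'' functionals on $H_A$ are automatically of the row form described above with the stated norm estimate; these are standard but deserve an explicit reference (e.g. \cite{Lance} or \cite{MaTroBook}). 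The genuinely new input is just the Dini-type observation, and everything else is bookkeeping around stabilization and the definition of the dual module.
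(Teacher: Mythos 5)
Your reduction to the standard module via Kasparov stabilization is fine, and the description of bounded $C(X)$-linear functionals on $\ell^2(C(X))$ as rows $(f_1,f_2,\dots)$ with $\sup_n\bigl\|\sum_{i\le n}|f_i|^2\bigr\|<\infty$ is the standard Paschke picture. The proof breaks down at the Dini step. Dini's theorem requires as a \emph{hypothesis} that the pointwise limit be continuous; here the limit $F(x)=\sum_i|f_i(x)|^2$ is only a bounded increasing pointwise limit of continuous functions, hence lower semicontinuous but in general \emph{not} continuous, and consequently your tails $g_n=F-\sum_{i\le n}|f_i|^2$ are not even continuous functions, let alone uniformly convergent to $0$. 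Concretely, on $X=[0,1]$ put $h_n(x)=\min(nx,1)$ and $f_n=\sqrt{h_n-h_{n-1}}$ (with $h_0=0$): each $f_n$ is continuous, $\sum_{i\le n}|f_i|^2=h_n\le 1$, so $(f_n)$ defines a bounded $C[0,1]$-linear functional on $\ell^2(C[0,1])$, but $F$ is the (discontinuous) indicator of $(0,1]$, so $(f_n)\notin\ell^2(C[0,1])$. Thus $\ell^2(C(X))$ is \emph{not} self-dual for any infinite compact metric $X$, and your conclusion that every countably generated $C(X)$-module is self-dual is false; it would also contradict Theorem \ref{teo:infiniteselfdual}, which shows that self-duality of $L_\G(X)$ forces strong finiteness restrictions on the orbit structure.

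The theorem really is about the \emph{bidual}: one must show that every bounded functional on the (strictly larger) dual module $\ell^2(C(X))'$ is already represented by an element of $\ell^2(C(X))$, which is a genuinely different and more delicate statement. This is where metrizability enters in an essential way that your argument does not capture: the Mishchenko--Trofimov proof that the paper invokes uses that every point in the closure of a subset of $X$ is the limit of a sequence from that subset, i.e.\ first countability, which holds for compact metric spaces but not for general compact Hausdorff spaces (and indeed $\ell^2(C(\beta\N))$ fails to be $C^*$-reflexive, cf.\ Example \ref{ex:betaNtimesS}). So the missing content is precisely the analysis of functionals on the dual module; the self-duality shortcut cannot work.
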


\begin{proof}
The first version of a proof appeared in
\cite{Mishchenko-Trudy_MIAN}. Then Trofimov \cite{TrofimovUMN1987}
realized that the formulation in \cite{Mishchenko-Trudy_MIAN} was
too general and provided a proof for any compact $X$ with a
certain property L. While preparing this paper, we understood that
the property L of Trofimov for $X$ is the same as the property of
$X$ to be a compact Baire space. So, any compact Hausdorff space
has this property L, and $C^*$-reflexivity would have place for any
countably generated module over any unital commutative
$C^*$-algebra, which is obviously not true, e.g. for von Neumann
algebras \cite{Paschke2}. Nevertheless, the main part of
Trofimov's proof is correct. But it was overlooked that implicitly
the proof used that, for any subset $E\subset X$ and for any point
$t_0$ in the closure of $E$, there exists a sequence of points
$t_n\in E$, which converges to $t_0$. In other words, the topology
on $X$ is supposed to possess a countable base of neighborhoods at
any point of $X$. This is not true in general,
but if we restrict ourselves to the case of compact {\it metric}
spaces then this is obviously true. Under this additional
assumption, Trofimov's proof is correct.
\end{proof}

\begin{cor}\label{corollary-reflexivity}
Let $X$ be a compact metric space, and let an action of $\G$ on
$X$ be Lyapunov stable. Then the module $L_\G(X)$ is
$C^*$-reflexive.
\end{cor}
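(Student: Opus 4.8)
The plan is to deduce the statement from Theorem~\ref{mishchenko-trofimov} by checking its two hypotheses for the Hilbert $C_\G(X)$-module $L_\G(X)$: that the coefficient algebra is $C(Z)$ for a compact \emph{metric} space $Z$, and that $L_\G(X)$ is \emph{countably generated}. Granting both, $C^*$-reflexivity is immediate.

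To begin, the module $L_\G(X)$ is well defined: since the action is Lyapunov stable, Theorem~\ref{teo:avercorrlyap} shows that conditions 1') and 2') hold, so $C(X)$ carries the $C_\G(X)$-valued inner product $\<\cdot,\cdot\>_L$ of (\ref{eq:innerprod}), with $L_\G(X)$ its completion, and by Lemma~\ref{lem:LyapunovGelfand} the Gelfand spectrum of $C_\G(X)$ is the space $\til{X/\G}$ of closures of orbits, a compact Hausdorff space. For the metrizability of $\til{X/\G}$, observe that $X$ compact metric makes $C(X)$ separable; its $C^*$-subalgebra $C_\G(X)$ is then separable as well, so its Gelfand spectrum $\til{X/\G}$ is second countable and hence a compact metric space. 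Thus $C_\G(X)\cong C(\til{X/\G})$ falls within the scope of Theorem~\ref{mishchenko-trofimov}.

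For countable generation, note the elementary bound $\<\f,\f\>_L(x)=M(|\f_x|^2)\le\|\f\|_\infty^2$ for $\f\in C(X)$, whence $\|\f\|_L\le\|\f\|_\infty$; so the canonical map $C(X)\to L_\G(X)$ is norm-decreasing, and it has dense image by the definition of $L_\G(X)$ as a completion. Choosing a countable $\|\cdot\|_\infty$-dense subset $\{\f_n\}\ss C(X)$ — available because $X$ is compact metric — this set is dense in $L_\G(X)$ for the module norm, so the closed $C_\G(X)$-submodule it generates is all of $L_\G(X)$. Hence $L_\G(X)$ is countably generated over $C_\G(X)=C(\til{X/\G})$.

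It then remains only to invoke Theorem~\ref{mishchenko-trofimov} with the compact metric space $\til{X/\G}$ and the countably generated module $L_\G(X)$, giving $L_\G(X)=L_\G(X)''$. The one substantive ingredient is Theorem~\ref{mishchenko-trofimov} itself; the mild obstacle in the argument above is simply the bookkeeping — checking that separability passes from $C(X)$ to $C_\G(X)$ and hence gives metrizability of the orbit-closure space, and that the sup-norm estimate on the inner product lets a countable sup-dense subset of $C(X)$ serve as a generating set for $L_\G(X)$.
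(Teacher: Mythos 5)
Your proposal is correct and follows exactly the paper's argument: reduce to Theorem~\ref{mishchenko-trofimov} by noting that $C_\G(X)$ is separable (so its Gelfand spectrum is compact metrizable) and that a countable sup-norm-dense subset of $C(X)$ generates $L_\G(X)$, since the module norm is dominated by the sup norm. The paper states these two facts without elaboration; your write-up just supplies the routine verifications.
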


\begin{proof}
Since $X$ is metric, the module $L_\G(X)$ is countably generated
and the $C^*$-algebra $C_\G(X)$ is separable, hence its Gelfand
spectrum is metrizable.
\end{proof}

\begin{ex}
Let $D=\prod_{k-1}^\infty D_k$, where each $D_k$ is the two-points
space with the distance between the two points equal to $2^{-k}$,
and let $X=J\times D$. Let $G=\oplus_{k=1}^\infty \mathbb Z_2$,
$G_n=\oplus_{k=1}^n\mathbb Z_2$ and $\pi_n:G\to G_n$, $i_n:G_n\to
G$ be the standard projection and inclusion homomorphisms. Denote
their composition by $p_n=i_n\circ\pi_n:G\to G$. Let $\alpha$
denote the standard action of $G$ on $D$. Define an action $\beta$
of $G$ on $X$ by the formula
$$
\beta_g\left(\frac{1}{n},d\right)=\left(\frac{1}{n},\alpha_{p_n(g)}(d)\right),
n\in\mathbb N \setminus \{0\},\ \ {\rm and}\ \ \beta_g(0,d)=(0,\alpha_g(d)),\ \
{\rm where}\ \  d\in D.
$$

It is easy to see that the following properties hold for this
action:

\begin{itemize}
\item
The orbit of any point of the form $(\frac{1}{n},d)$ is finite and
consists of $2^n$ elements.
\item
The orbit of any point of the form $(0,d)$ is infinite.
\item
The action $\beta$ is continuous.
\item
The action $\beta$ is Lyapunov stable.
\end{itemize}

It follows from Corollary \ref{corollary-reflexivity} that the
module $L_\G (X)$ is $C^*$-reflexive in this example.
\end{ex}

\section*{4.2. \ The non-metric case}

After we have clarified, how $C^*$-reflexivity arises in the metric
case, let us pass to the case, when $X$ is non-metric. To begin
with, we give an example of a non-$C^*$-reflexive module $L_\G(X)$.

\begin{ex}\label{ex:betaNtimesS}
Let $K$ be a (non-metrizable) compact space such that $l_2(A)$ is
not $C^*$-reflexive, where $A=C(K)$. That is the case for $A$
being a von Neumann algebra, and one of the most important cases
is that of $K=\b \N$, the Stone--\v Cech compactification of
integers. Consider the compact space $X=K\times S^1$ equipped with
the action of $\Z$ by irrational rotation in the second argument:
$$
m(y,s)=(y,e^{\a\pi m}s),\qquad m\in \Z,\quad
\a\in\R\setminus\mathbb Q,\quad s\in S^1\ss \C.
$$
This is an isometric action on $S^1$ and a trivial one on $K$,
hence it is Lyapunov stable. Evidently, the algebra of
continuous invariant functions $C_\G(X)$ is $A=C(K)$. By Theorem
\ref{teo:opisanieLgamma}, the module $L_\G(X)$ is the set of all
functions $\psi:X\to \C$ such that
\begin{enumerate}[\rm 1)]
    \item $\psi|_{\ov\g}\in L^2(\ov\g,\mu_{\ov\g})$ for each orbit $\g$, i.e.
    $\psi_y(s)=\psi(y,s)\in L^2(S^1)$;
    \item for any $\f\in C(X)$ the function $\<\psi,\f\>_L$ is continuous.
\end{enumerate}
Let $\{e_j\}$ be a countable system of orthonormal functions
forming an orthonormal basis of $L^2(S^1)$ (e.g. exponents). Then
$\{1_A\cdot e_j\}$ is an orthonormal system in $L_\G(X)$:
$$
\<1_A\cdot e_j,1_A\cdot e_k\>_L(y)=\int_{S^1}
1_A(y)e_j(s)\ov{e_k(s)}\ov{1_A(y)}\,ds =\d_{jk}.
$$
Let us show that the $C(K)$-linear span of $\{1_A\cdot e_j\}$ is dense
in $C(X)$ (hence, in $L_\G(X)$) with respect to the Hilbert module
distance. Let $\f\in C(X)$. Then for any $\e>0$ we can choose a
division $\Delta_1,\dots,\Delta_d$ of $S^1$ such that
$\sup_{\Delta_i}(\f-f_i)<\frac \e d$, where $f_i$ is independent
on $s\in S^1$, i.e. actually $f_i\in A$, and
$\sup_{\Delta_i}|f_i|\le 2 \sup_X |\f|$. Let $\chi_i$ be the
indicator function of $\Delta_i$, $i=1,\dots,d$. Take $\hat\chi_i$
to be a $\C$-linear combination of $\{e_j\}$ such that
$$
\|\chi_i - \hat\chi_i\|_{L^2(S^1)}<\frac \e d,\qquad i=1,\dots,d.
$$
Then
$$
\hat \f (y,s):= \sum_{i=1}^d f_i(y)\cdot \hat\chi_i(s) \in {\rm
span}_{C(K)} \{1_a\cdot e_j\}.
$$
Let $\psi\in C(X)$, $\|\psi\|_L \le 1$. Then
\begin{eqnarray*}
  \|\<\f-\hat\f,\psi\>\|_L &=& \sup_{y\in K} \left| \int_{S^1} (\f(y,s)-\hat\f(y,s))\ov\psi
  (y,s)\,ds\right| \\
   &\le&  \sup_{y\in K} \left| \int_{S^1} \left(\f(y,s)-
   \sum_{j=1}^d f_j(y)\chi_j(s)\right)\ov\psi (y,s)\,ds\right|\\
  &&+ \sup_{y\in K} \left| \sum_{j=1}^d \int_{S^1} f_j(y)(\chi_j(s)-\hat\chi_j(s))\ov\psi
  (y,s)\,ds\right|\\
  &\le& \sup_{y\in K} \left( \sup_{s\in S^1} \left|\f(y,s)-
   \sum_{j=1}^d f_j(y)\chi_j(s)\right|\right)\left(\int_{S^1}|\psi (y,s)|^2\,ds\right)^{1/2}\\
  &&+ \sup_{y\in K} d \cdot \sup_{j=1,\dots,d} |f_j(y)|\cdot
  \left(\int_{S^1} |\chi_j(s)-\hat\chi_j(s)|^2 \,ds\right)^{1/2}
  \cdot \left(\int_{S^1}|\psi (y,s)|^2\,ds\right)^{1/2} \\
  &\le& \sup_{i=1,\dots,d} \sup_{K\times \Delta_i}|\f(y,s)-   f_i(y)| \cdot \|\psi\|_L
  + (2\,\sup_{x\in X} \|\f(x)\|)\cdot d \cdot \frac \e d \cdot \|\psi\|_L\\
  & < & \e\,\left(1+ 2\,\sup_{x\in X} \|\f(x)\|\right).
\end{eqnarray*}
Thus, $L_\G(X)=l_2(A)$ and is not $C^*$-reflexive.
\end{ex}

Although we are far from obtaining a criterium for $C^*$-reflexivity, we
can give a sufficient condition even in the non-metric case.

\begin{teo}\label{teo:suffic_refl}
Consider a Lyapunov stable action of $\G$ on a compact Hausdorff
space $X$, where $X$ is not necessarily metrizable. Suppose, the
cardinality of finite orbits is uniformly bounded and the number of
closures of infinite orbits is finite. Then $L_\G(X)$ is $C^*$-reflexive.
\end{teo}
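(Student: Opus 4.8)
The plan is to reduce $C^*$-reflexivity of $L_\G(X)$ to that of the ``finite part'' of the action by splitting off the finitely many closures of infinite orbits, and then to recognise the finite part as the situation already treated in \cite{FMTZAA,TroPAMS}.

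\emph{Step 1: the global decomposition.} Reusing the arguments from the proof of Theorem~\ref{teo:infiniteselfdual}, I first note that, since the cardinality of finite orbits is uniformly bounded (say by $N$), the set $X_f\ss X$ of points lying on finite orbits is closed and $\G$-invariant (the argument on page~\pageref{pagefinpart}). Each of the finitely many closures $\ov\g_1,\dots,\ov\g_n$ of infinite orbits contains a dense orbit, so every continuous $\G$-invariant function on it is constant; hence $\ov\g_j$ corresponds to a single point $z_j$ of the Gelfand spectrum $\til{X/\G}$ of $C_\G(X)$. By Lemma~\ref{lem:LyapunovGelfand}, $\til{X/\G}$ is the space of orbit closures, so $\til{X/\G}=(X_f/\G)\sqcup\{z_1,\dots,z_n\}$; since $X_f/\G$ is closed and $\{z_1,\dots,z_n\}$ is finite, each $z_j$ is isolated and $X_f/\G$ is clopen. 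Therefore $X=X_f\sqcup\ov\g_1\sqcup\cdots\sqcup\ov\g_n$ as $\G$-spaces, $C_\G(X)=C_\G(X_f)\oplus\C^n$, and by Theorem~\ref{teo:opisanieLgamma} the module splits orthogonally as
$$
L_\G(X)=L_\G(X_f)\oplus L^2(\ov\g_1,\mu_{\ov\g_1})\oplus\cdots\oplus L^2(\ov\g_n,\mu_{\ov\g_n})
$$
over the corresponding block decomposition of $C_\G(X)$.

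\emph{Step 2: reduction to the finite part.} Both the dual $(\,\cdot\,)'$ and, consequently, the bidual $(\,\cdot\,)''$ commute with finite orthogonal direct sums, and the canonical map into the bidual is the direct sum of the canonical maps; hence $L_\G(X)$ is $C^*$-reflexive as soon as each summand is. Each $L^2(\ov\g_j,\mu_{\ov\g_j})$ is a Hilbert space over $\C$, hence self-dual by the Riesz representation theorem, in particular $C^*$-reflexive. It remains to prove that $L_\G(X_f)$ is $C^*$-reflexive over $C_\G(X_f)$.

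\emph{Step 3: the finite part, and the obstacle.} On $X_f$ every orbit is finite of cardinality $\le N$, so $\frac1N\|\f\|_\infty^2\le\|\<\f,\f\>_L\|\le\|\f\|_\infty^2$; thus $C(X_f)$ is already complete in the Hilbert-module norm, $L_\G(X_f)=C(X_f)$, and by Theorem~\ref{equiv.aver} its $C_\G(X_f)$-valued inner product is the orbit-averaging inner product. This is exactly the finite-orbit setting of \cite{FMTZAA,TroPAMS}, where $C(X_f)$ is shown to be $C^*$-reflexive over $C_\G(X_f)$, and invoking that finishes the proof. If one prefers a self-contained argument — which also dispenses with any metrizability — one stratifies $X_f$ by the closed $\G$-invariant sets $X_{f,\le k}=\{x:\#\G x\le k\}$ and argues by induction on $k$: on the open stratum $\{x:\#\G x=k\}$ the orbit size is constant, so the module of sections there is locally free of rank $k$ over $X_f/\G$, and one propagates reflexivity through the extensions
$$
0\longrightarrow C_0(X_{f,=k})\longrightarrow C(X_{f,\le k})\longrightarrow C(X_{f,\le k-1})\longrightarrow 0,
$$
using that every closed ideal of a commutative unital $C^*$-algebra, and more generally the module of $C_0$-sections of a bounded-rank continuous field of Hilbert spaces over an open subset, is reflexive. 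The main obstacle is precisely this last step: when the orbit cardinality is not constant, $C(X_f)$ is in general neither finitely generated nor — for non-metrizable $X$ — countably generated over $C_\G(X_f)$, so neither the finite-projectivity argument of \cite{PavTro} nor Theorem~\ref{mishchenko-trofimov} applies verbatim, and one must verify by hand that reflexivity survives these non-split extensions.
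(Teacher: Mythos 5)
Your proposal follows essentially the same route as the paper's proof: identify $X_f$ as closed and invariant, split the Gelfand spectrum into the closed set $X_f/\G$ plus finitely many isolated points for the infinite-orbit closures, use that dualization respects this finite orthogonal decomposition and that each $L^2(\ov\g_j,\mu_{\ov\g_j})$ is a self-dual Hilbert space over $\C$, and reduce to the pure finite-orbit case. The only discrepancy is the reference for that final step: the paper invokes Seregin's reflexivity theorem \cite{seregin} for actions with uniformly bounded finite orbits (rather than \cite{FMTZAA,TroPAMS}), which is precisely the result your closing ``obstacle'' paragraph worries about, so no by-hand verification of the non-split extensions is required.
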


\begin{proof}
By the argument in the proof of the first part of
Theorem~\ref{teo:infiniteselfdual} (see
page~\pageref{pagefinpart}) finite orbits form a closed invariant
subset $X_f\ss X$. The Gelfand spectrum consists of a closed
subspace $X_f/\G$ and a finite number of isolated points
corresponding to the closures of infinite orbits. Arguing as in the
second part of Theorem~\ref{teo:infiniteselfdual} (see
page~\pageref{pagefinpart2}), we reduce this case to the case of
pure finite orbits \cite{seregin}.
\end{proof}


\section{Further examples}\label{sect:examples}

We want to show by examples that there are other situations
beyond the described above in which a well-defined averaging
can be found leading to admissible $C^*$-valued inner products
and derived Hilbert $C^*$-module structures on the corresponding
commutative $C^*$-algebras.

The following example shows that we can have a non-Lyapunov stable
action with a good average.

\begin{ex}\label{ex:goodnonlyapu}
Let $\G=\Z$. Let $X$ be the direct product $X=J\times S^1$
of the subset
$$
J=\{0, 1, 1/2, 1/3, \dots \} \ss \R
$$
and the unit circle. Let $\a_i\to \a$ be a sequence of
irrational numbers, such that $\a$ is irrational and $\a/\a_i$
is irrational for every $i$. Let the generator of $\Z$
rotate $\{1/i\}\times S^1$ by $\a_i$, and the limit circle $\{0\}\times S^1$
by $\a$. Clearly we have 1') and 2') in this case.
\end{ex}

The next example demonstrates that in the case of presence of
infinite orbits, uniform continuity is not sufficient for
continuity of the average.

\begin{ex}\label{ex:spiralalpha}\cite[Example 25]{TroPAMS}
\rm Let $X\ss\R^3$ consist of two circles
 $$
S_\pm :\qquad \left\{
 \begin{array}{rcl}
x &=& \cos 2\pi t \\
y &=& \sin 2\pi t \\
z &=&  \pm 1,
 \end{array}
\right. \qquad t\in (-\infty,+\infty),
 $$
and of a non-uniform spiral
 $$
\Sigma:\qquad \left\{
 \begin{array}{rcl}
x &=& \cos 2\pi \t \\
y &=& \sin 2\pi \t \\
z &=&  \frac 2 \pi\cdot \arctan \t,
 \end{array}
\right. \qquad  (-\infty,+\infty).
 $$
Let the generator $g$ of $\G=\Z$ act on all three components by
 $$
t\mapsto t+\a,\qquad \t\mapsto \t+\a,
 $$
where $\a$ is a positive irrational number. Then the isotropy group
of each point of $X$ is trivial. Hence, the condition of uniform
continuity holds automatically.

Let $\varphi:X\to\mathbb R$ be the restriction of the function
$\mathbb R^3\ni (x,y,z)\mapsto z$ onto $X$, then the function
$\varphi_x$ on $\mathbb Z$ has the following form:
if $x\in S_\pm$ then $\varphi_x=\pm 1$; if $x\in\Sigma$ then
$\varphi_x$ is a function on $\mathbb Z$ such that
$\varphi_x(n)\in[-1,1]$ for any $n\in\mathbb Z$ and
$\lim_{n\to\pm\infty}\varphi_x(n)=\pm 1$. So, $\varphi_x$ is in
general not almost periodic and we cannot average it using our
definition. Nevertheless, we can average it using the amenability
of the group $\mathbb Z$. In this case we get
$E_\G(\varphi_x)=\left\lbrace
\begin{array}{cl}\pm 1&{\rm for}\ x\in S_{\pm}\\0&{\rm for}\
x\in\Sigma\end{array}\right.$. Thus we see that $E_\G(\varphi_x)$
is not continuous with respect to $x\in X$.
\end{ex}

\begin{ex}
In the previous example let us identify the two circles, $S_+$ and
$S_-$. Then $X$ would consist of the spiral $\Sigma$ and of the
circle $S$. Still, the function $\varphi_x$ on $\G=\mathbb Z$ need
not be almost periodic, but there is an almost periodic function
$\rho$ on $\mathbb Z$ such that for any $\varepsilon>0$ there is
finite subset $F\subset\mathbb Z$ such that
$\|\varphi_x-\rho\|<\varepsilon$ on $\mathbb Z\setminus F$. This
makes it possible to define an average $E_\G(\varphi)$ by
$E_\G(\varphi_x)=M(\rho)$. And it is easy to see that, this
time, $E_\G(\varphi_x)$ is continuous with respect to $x\in X$.
\end{ex}

\begin{ex}
Our next example is a modification of Example
\ref{ex:betaNtimesS}. Let $Y=\mathbb N\times\mathbb S^1$, $X=\beta
Y$ its Stone--\v Cech compactification. Let $\Gamma=\mathbb Z$ act
on $Y$ by rotating each circle by the irrational angle $\alpha$.
This action canonically extends to an action on $X$.

Let $s\in\mathbb S^1$. Then the inclusion $\mathbb N\to\mathbb
N\times\mathbb S^1$, $n\mapsto (n,s)$, canonically extends to a
map $s_*:\beta\mathbb N\to X$ and $m(s_*(x))=(s\cdot
e^{im\alpha})_*(x)$ for any $x\in\beta\mathbb N$ and any
$m\in\Gamma=\mathbb Z$.

Let $\varphi\in C(X)$. Since $C(X)=C_b(Y)$ (continuous functions
on $X$ are canonically identified with bounded continuous
functions on $Y$), $\varphi$ can be identified with a uniformly
bounded sequence $(\varphi^{(1)},\varphi^{(2)},\ldots)$ of
continuous functions on $\mathbb S^1$. Let $x\in\beta\mathbb N$.
Then
$$
\varphi_{s_*(x)}(m)=\varphi((s\cdot e^{im\alpha})_*(x)),
$$
where $m\in\mathbb Z$.

Let $\mathcal U_x$ be an ultrafilter on $\mathbb N$, which
corresponds to the point $x\in\beta\mathbb N$. Then
$$
\varphi_{s_*(x)}=\lim_{\mathcal
U_x}(\varphi^{(n)}(s))_{n=1}^\infty,
$$
where the limit of the sequence $(\varphi^{(n)}(s))_{n=1}^\infty$
is taken over $\mathcal U_x$, hence
$$
\varphi_{s_*(x)}(m)=\lim_{\mathcal U_x}(\varphi^{(n)}((s\cdot
e^{im\alpha})_*(x)))_{n=1}^\infty.
$$
Take $\varphi^{(n)}(s)=e^{ins}$. Then $\varphi\in C_b(Y)=C(X)$.
Then
$$
\varphi_{1_*(x)}(m)=\lim_{\mathcal
U_x}(e^{inm\alpha})_{n=1}^\infty.
$$
Let $\mathcal U$ be an ultrafilter on $\mathbb N$ such that
$\lim_{\mathcal U}(e^{in\lambda})_{n=1}^\infty=0$ for any
$\lambda\in(0,2\pi)$, and let $x_0\in\beta\mathbb N$ be the point
that corresponds to $\mathcal U$. Then
$$
\varphi_{1_*(x_0)}(m)=\left\lbrace\begin{array}{cl}1,&{\rm if}\
m=0,\\0,&{\rm if}\ m\neq 0.\end{array}\right.
$$
Thus, for the point $y=1_*(x_0)\in X$ and for the function
$\varphi\in C(X)$ we see that the function $\varphi_y$ is not
almost periodic on $\mathbb Z$.

Nevertheless, there is a `good' averaging in this example. Since
any continuous function $\varphi$ on $X$ is a uniformly bounded
sequence of functions $\varphi^{(n)}$, $n\in\mathbb N$, on
$\mathbb S^1$, it is easy to see that $C_\Gamma(X)\cong
C_b(\mathbb N)$, and one can define $E_\Gamma(\varphi)$ by the
formula $(E_\Gamma(\varphi))_n=\int_{\mathbb
S^1}\varphi^{(n)}(s)\,ds$.
\end{ex}

These examples show that a good averaging (and an inner product
with values in $C_\Gamma(X)$) can be defined in a wider class than
Lyapunov stable actions. On the other hand, as the last two
examples show, a good averaging, when exists, may give rise to a
degenerate inner product.

\section{Appendix}\label{sec:append}
In this section we will prove the following assertion
on the uniqueness of invariant regular measures:

\begin{lem}
Suppose, a discrete group $\G$ acts on a compact
Hausdorff space $X$ in such a way that the orbit $\g$ of an
element $a\in X$ is dense in $X$. If the action is
Lyapunov stable, then $X$ carries not
more than one invariant regular measure.
\end{lem}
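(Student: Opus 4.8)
The plan is to prove that every invariant regular probability measure on $X$ represents one and the same positive unital functional on $C(X)$ — namely $\f\mapsto M(\f_a)$ — and then to invoke the Riesz-Markov-Kakutani theorem. Since the orbit $\g=\G a$ is dense, $\ov\g=X$, so this simultaneously completes the uniqueness claim left open in Theorem~\ref{equiv.aver}.

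First I would record that the action is Lyapunov stable, so Proposition~\ref{prop:almperiod} gives that $\f_a\colon\G\to\C$, $\f_a(g)=\f(ga)$, is almost periodic for every $\f\in C(X)$; hence the invariant mean $M(\f_a)$ is defined. Fix $\f\in C(X)$ and $\e>0$. Exactly as in the proof of Theorem~\ref{teo:avercorrlyap}, using \cite[pp.~250--251]{HewRos}, I would choose $h_1,\dots,h_p\in\G$ with
$$\left\|\,M(\f_a)-\frac1p\sum_{j=1}^p D_{h_j}\f_a\,\right\|_u<\e ,\qquad (D_h\psi)(g_1,g_2):=\psi(g_1hg_2).$$
Specializing to $g_1=e$ shows that the continuous function $\Psi\colon X\to\C$, $\Psi(x):=\frac1p\sum_j\f(h_j x)$, satisfies $|\Psi(x)-M(\f_a)|<\e$ for every $x$ in the orbit $\g$; since $\g$ is dense and $\Psi$ is continuous (each $x\mapsto h_jx$ is a homeomorphism of $X$), we get $|\Psi-M(\f_a)|\le\e$ on all of $X$.

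Now let $\mu$ be an invariant regular probability measure. Invariance gives $\int_X\f(h_jx)\,d\mu(x)=\int_X\f\,d\mu$ for each $j$, hence $\int_X\Psi\,d\mu=\int_X\f\,d\mu$, and therefore $\bigl|\int_X\f\,d\mu-M(\f_a)\bigr|=\bigl|\int_X(\Psi-M(\f_a))\,d\mu\bigr|\le\e$. As $\e$ was arbitrary, $\int_X\f\,d\mu=M(\f_a)$ for all $\f\in C(X)$, so the functional $\f\mapsto\int_X\f\,d\mu$ is independent of $\mu$; by the Riesz-Markov-Kakutani theorem \cite[Theorem~3, Sect.~IV.6]{DanSch} the regular measure $\mu$ is then uniquely determined. (The same computation shows an arbitrary invariant regular measure $\nu$ obeys $\int_X\f\,d\nu=\nu(X)\,M(\f_a)$, so it is pinned down by its total mass; in particular there is at most one of total mass $1$.)

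The part that really has to be in place is the almost-periodicity input: the whole argument rests on Proposition~\ref{prop:almperiod} (this is where Lyapunov stability is used) together with the Hewitt-Ross fact that the constant $M(\f_a)$ lies uniformly in the convex hull of the two-sided translates $D_h\f_a$. After that, passing from the orbit to all of $X$ needs only density of $\g$ and continuity of the action, and invariance of $\mu$ finishes the estimate; I do not anticipate a genuine obstacle beyond this bookkeeping. Alternatively one could reduce directly to the classical uniqueness statement \cite[Ch.~VII, \S~1, Problem~14]{Bourb.Int.II} via a Fubini/convolution argument on $\int\!\int\f(\cdot)\,d\mu\,d\nu$, but the route through $M$ reuses machinery already developed above.
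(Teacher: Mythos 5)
Your argument is correct, and it is genuinely different from (and considerably shorter than) the proof the paper gives. The paper follows Bourbaki's covering-number scheme: it introduces the indices $(A:B)$ (minimal number of translates $gB$ needed to cover $A$), proves the two inequalities relating these indices to $\nu$ of compact and open sets, passes to limits $\l$ and $\l'$ along a net of invariant entourages, and concludes $\l'(K_1)\nu(K_2)=\l'(K_2)\nu(K_1)$, which pins down $\nu$ up to a scalar — essentially a hand-made uniqueness-of-Haar-measure argument transplanted to the orbit closure. You instead reuse machinery already present in Sections~2: Proposition~\ref{prop:almperiod} (where Lyapunov stability enters) to get almost periodicity of $\f_a$, the Hewitt--Ross fact that $M(\f_a)$ is a uniform limit of convex combinations of two-sided translates, and then density of the orbit plus continuity of $\Psi(x)=\frac1p\sum_j\f(h_jx)$ to propagate the estimate from $\g$ to all of $X$; invariance of $\mu$ then forces $\int\f\,d\mu=\mu(X)\,M(\f_a)$. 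This buys you three things the paper's route does not make explicit: an identification of the unique normalized measure as $\f\mapsto M(\f_a)$ (which is exactly the content needed for equation~(\ref{eq:twoaverages}) in Theorem~\ref{equiv.aver}), an argument that works verbatim for signed or complex regular invariant measures (the Bourbaki argument is stated for positive ones), and a much lighter proof. Two cosmetic points: with the paper's normalization the Hewitt--Ross approximation gives $\|M(\f_a)-\frac1p\sum_jD_{h_j}\f_a\|_u<2\e$ rather than $\e$, which changes nothing; and, like the paper, you should read the lemma's conclusion as uniqueness up to a positive scalar (equivalently, uniqueness of the normalized invariant measure), since scalar multiples of an invariant measure are again invariant — your closing parenthetical handles this correctly.
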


In fact the assertion follows from \cite[Ch.~VII, \S~1,
Problem~14]{Bourb.Int.II}. More precisely, denote the closure
$\ov\g$ of an orbit $\g$ by $T$. We can simplify the idea of the
argument of \cite[Ch.~VII, \S~1, Problem~14]{Bourb.Int.II} because
$T$ is a compact Hausdorff space. For any subset $A\ss T$ and any
$B\ss T$ with interior points denote by $(A:B)$ the minimal
cardinality of covers of $A$ formed by sets $g B$, $g\in \G$.
Density of $\gamma$ in $T$ and Lyapunov stability imply the
finiteness of this number, or, more precisely, that such a cover
exists. To prove this, we will show that for any open set $B$ we
have $\G(B)=T$. Suppose the opposite: $x\not\in \G(B)$. Then $\G
x$ is not dense in $X$. A contradiction to Lemma
\ref{lem:LyapunovGelfand}.

In the
uniform structure of $T$ there exists a fundamental sub-system $S$
formed by invariant sets in $T\times T$ (under the diagonal action
of $\G$). Note, if $\G(\mathbb V)\ss \mathbb U$ by Lyapunov
stability, then $\cup_{g\in\G} g(\mathbb V)$ is an invariant
neighborhood of the diagonal, containing $\mathbb V$ and contained
in $\mathbb U$.

If $C\ss T$ is a third relatively compact set with interior points,
then $ (A:C)\le (A:B)\cdot (B:C). $

Recall the following notation. If $\mathbb V$ and $\mathbb W$ are subsets of $T\times T$,
then $\mathbb V\mathbb W\ss T\times T$ is formed by all pairs $(x,y)$ such that there
exists an element $z\in T$ with $(x,z)\in \mathbb V$ and $(z,y)\in\mathbb  W$.
If $K\ss T$ is an arbitrary set, then
$$
\mathbb V(K):=\{y\in T\, |\, (x,y)\in \mathbb V \quad {\rm for} \quad {\rm some}\quad x\in K \}.
$$
If $K=\{a\}$, we write $\mathbb V(a)$.

Suppose, $K\ss T$ is a compact subset, $L\supset K$ is a open set,
and $\mathbb V\in S$ is an invariant symmetric open neighborhood
of the diagonal, such that $\mathbb V(K)\ss L$, and $\mathbb W \in
S$ is a closed  invariant symmetric neighborhood of the diagonal
such that $\mathbb W\ss \mathbb V$. Let $\mathbb U$ be a symmetric
invariant set containing the diagonal such that $\mathbb U \mathbb
W\ss \mathbb V$ and $ \mathbb W \mathbb U\ss V$. Then for any
invariant (regular) positive measure $\nu$ one has
\begin{eqnarray}
  ( \mathbb W(a):\mathbb U(a))\cdot\nu(K) &\le & (L: \mathbb U(a))\cdot \nu(\mathbb V(a)),
  \label{burbint1}\\
  (K:\mathbb U(a))\cdot \nu( \mathbb  W(a)) & \le & (\mathbb V(a):\mathbb U(a))\cdot \nu(L).\label{burbint2}
\end{eqnarray}
Indeed, let $L={\bigcup}_{i=1}^{(L:\mathbb U(a))} g_i \mathbb U(a)$, $g_i\in \G$.
Then each $x\in K$ belongs at least to $( \mathbb W(a):\mathbb  U(a))$ sets from
the collection $\{g_i \mathbb V(a)\}$. Indeed, $\mathbb W(x)$ is covered by $g_i \mathbb U(a)$.
Hence, its number (number of those of them, which really intersect $\mathbb W(x)$) is
greater-equal to $(\mathbb W(x):\mathbb U(a))= (\mathbb W(a):\mathbb U(a))$.
The last equality follows from the density
of the orbit, Lyapunov property and the invariance of $\mathbb W$: $\mathbb W(gx)=g\mathbb W(x)$.
It remains to show that if $g_i \mathbb U(a)\cap\mathbb  W(x) \ne \varnothing$, then
$x\in g_i\mathbb  V(a)$. In this case let $g_i s \in g_i \mathbb U(a)\cap \mathbb W(x)$
for some $s\in T$, then $(g_i a, g_i s)\in \mathbb U$, $(x,g_i s)\in \mathbb W$.
 Since the sets are symmetric and $\mathbb  W \mathbb U\ss \mathbb V$,
we have $(x,g_i a) \in \mathbb V$. Then $(a, g_i^{-1} x)\in\mathbb  V$, $g_i^{-1} x\in \mathbb V(a)$,
$ x\in g_i\mathbb V(a)$. Thus,
$$
\nu(K) \le \frac 1{( \mathbb W(a):\mathbb  U(a))} \sum_{i=1}^{(L:\mathbb U(a))} \nu (g_i \mathbb U(a)).
$$
Since $\nu$ is invariant, we obtain (\ref{burbint1}). To obtain (\ref{burbint2})
in a similar way, we will show that each $y\in L$ belongs to not more
than $(\mathbb V(a):\mathbb U(a))$ sets of the form $g_i \mathbb W(a)$. Indeed, if
$y\in g_i\mathbb  W(a)$,
then $y= g_i s$ and $(a,s)\in \mathbb W$ for some $s\in T$, as well as $(g_i a,g_i s)$.
Let $(a,t)\in \mathbb U$. Since $\mathbb W\mathbb U\ss \mathbb V$,
$(s,t)\in \mathbb V$, $t\in \mathbb V(s)$, $\mathbb U(a)\ss \mathbb V(s)$,
$g_i \mathbb U(a)\ss g_i \mathbb V(s)=\mathbb V(y)$. So, $g_i \mathbb U(a)$
($i=1,\dots,(L:\mathbb U(a)$) form a
minimal cover of $L$ while a part of this cover is inside $\mathbb V(y)\ss L$. Thus
the cardinality of this part is lower-equal to $(\mathbb V(y):\mathbb U(a))=
(\mathbb V(a):\mathbb U(a))$. Hence,
$$
\nu(L)\ge \frac{1}{(\mathbb V(a):\mathbb  U(a))} \cdot
\sum_{i=1}^{(L:\mathbb U(a))} \nu (g_i\mathbb  W(a))
=\frac {(L:\mathbb U(a))\nu (\mathbb W(a))}{(\mathbb V(a):\mathbb  U(a))}
\ge \frac {(K:\mathbb U(a))\nu (\mathbb  W(a))}{(\mathbb V(a):\mathbb U(a))}
$$
and we obtain (\ref{burbint2}).

Suppose, $A, A_0\ss T$ are non-empty open set, $K\ss T$ is a
compact set. Choose  a fundamental system of open invariant
symmetric neighborhoods $\mathbb U_\a$ of the diagonal $\D$
indexed by a net $\A$. Put
$$
\l_\a(A):=\frac{(A:\mathbb U_\a(a))}{(A_0:\mathbb U_\a(a))},\qquad  \l(A):=\lim_{\A}\l_\a(A)
\qquad{\rm and}
\qquad \l'(K):=\inf \l(B),
$$
where $B$ runs over the set of all relatively compact open neighborhoods of $K$.
Then from (\ref{burbint1}) and (\ref{burbint2}) we obtain
\begin{equation}\label{eq:burbint3}
\l'(\mathbb W(a))\cdot \nu(K)\le \l(L)\cdot \nu(\mathbb W(a)),\qquad \l'(K) \cdot \nu(\mathbb W(a))
\le \l'(\mathbb W(a))\cdot \nu(L)
\end{equation}
for $\mathbb W$ sufficiently close to the diagonal to have $\mathbb W(K)\ss L$.
Indeed, choose open symmetric invariant neighborhoods
of $\mathbb W$ inside its sufficiently small
$\mathbb U_\a$-neighborhood: $\mathbb W_\a$ and $\mathbb V_\a$
(i.e. $\mathbb W_\a \ss \mathbb U_\a \mathbb W$, $\mathbb V_\a \ss \mathbb U_\a \mathbb W$)
such that $\ov{\mathbb W_\a}\ss \mathbb V_\a$
and still $\mathbb V_\a(K)\ss L$.
Then choose $\b_0=\b_0(\a)$ such that
$U_\b \ov{\mathbb W_\a}\ss \mathbb V_\a$,  for all $\b>\b_0$.
Take in (\ref{burbint1}) $\mathbb W=\ov{\mathbb W_\a}$, $\mathbb V=\mathbb V_\a$,
$\mathbb U(a):= \mathbb U_\b(a)$, and divide both parts
by $(A_0:\mathbb U_\b(a))$:
$$
\frac{(\ov{\mathbb W_\a}(a):\mathbb U_\b(a))}{(A_0:\mathbb U_\b(a))} \nu(K) \le
\frac{(L:\mathbb U_\b(a))}{(A_0:\mathbb U_\b(a))} \nu(\mathbb V_\a(a)),
$$
$$
\frac{({\mathbb W_\a}(a):\mathbb U_\b(a))}{(A_0:\mathbb U_\b(a))} \nu(K) \le
\frac{(L:\mathbb U_\b(a))}{(A_0:\mathbb U_\b(a))} \nu(\mathbb V_\a(a)).
$$
Passing to the limit over $\b\in\A$ we obtain
$$
\l (\mathbb W_\a(a))\cdot \nu(K) \le \l(L) \cdot \nu(\mathbb V_\a(a))
$$
for any $\a\in \A$. Passing to the limit over $\a\in\A$ we get
$$
\l' (\mathbb W(a))\cdot \nu(K) \le \l(L) \cdot \nu(\mathbb W(a)).
$$
To obtain the second inequality in (\ref{eq:burbint3}) choose a sufficiently
small open neighborhood $K_\a$ of $K$ to have $\ov{K_\a}\ss L$
and add to the above restrictions the following one: $\mathbb V_\a(\ov{K_\a})\ss L$.
Then from (\ref{burbint2}) we have
$$
\frac{(\ov{K_\a}:\mathbb U_\b(a))}{(A_0:\mathbb U_\b(a))} \nu(\mathbb W(a))\le
\frac{(V_\a(a):\mathbb U_\b(a))}{(A_0:\mathbb U_\b(a))} \nu(L),
$$
$$
\frac{( {K_\a}:\mathbb U_\b(a))}{(A_0:\mathbb U_\b(a))} \nu(\mathbb W(a))\le
\frac{(\mathbb V_\a(a):\mathbb U_\b(a))}{(A_0:\mathbb U_\b(a))} \nu(L).
$$
Passing to the limit over $\b\in\A$ the inequality
$$
\l(K_\a) \cdot \nu(\mathbb W(a))\le \l(\mathbb V_\a(a)) \cdot \nu(L)
$$
holds for any $\a\in\A$. Now passing to the limit over $\a\in\A$
we get
$$
\l'(K) \cdot \nu(\mathbb W(a))\le \l'(\mathbb W(a)) \cdot \nu(L).
$$

In particular, if $K_1\ss T$ and $K_2\ss T$ are compact subsets, and
$L_1\supset K_1$ and $L_2\supset K_2$ are relatively compact open sets,
then (\ref{eq:burbint3}) implies
$$
\frac {\nu(K_2)}{\l(L_2)}\le \frac {\nu(\mathbb W(a)}{\l'(\mathbb W(a))}
\le \frac {\nu(L_1)}{\l'(K_1)},
\qquad \l'(K_1)\cdot \nu(K_2) \le \l(L_2)\cdot \nu(L_1).
$$
Passing to the infimum over $L_2\supset K_2$ (by the definition of $\l'$)
and over $L_1\supset K_1$ (by the regularity of $\nu$) we obtain
$\l'(K_1)\cdot \nu (K_2) \le \l'(K_2)\cdot \nu (K_1)$. Transposing $K_1$
and $K_2$ we have $\l'(K_2)\cdot \nu (K_1) \le \l'(K_1) \cdot\nu (K_2)$.
Hence, $\l'(K_1)\cdot \nu (K_2) = \l'(K_2)\cdot \nu (K_1)$. This uniquely
determines $\nu$ (up to a constant multiplier).

\bibliographystyle{amsplain}
\bibliography{ACT03eng}

\end{document}